\newcommand{\Addresses}{{
		\bigskip
		\footnotesize
		
		\textsc{Mathematical Institute, University of Oxford, Oxford, OX2 6GG, UK}\par\nopagebreak
		\textit{E-mail address:} \texttt{ofir.goro@gmail.com}		
		
		\bigskip
  \textsc{D{\'e}partment  de Math{\'e}matiques et Statistique,   Universit{\'e} de Montr{\'e}al, Montr{\'e}al, QC  H3C 3J7, Canada}
  \par\nopagebreak
		\textit{E-mail address:}
\texttt{valkovaleva42@gmail.com}
}}
\author{Ofir Gorodetsky, Valeriya Kovaleva} 
\title{Equidistribution of high traces of random matrices over finite fields and cancellation in character sums of high conductor}
\date{}
\theoremstyle{plain}
\newtheorem{thm}{Theorem}[section]
\newtheorem{lem}[thm]{Lemma}  
\newtheorem{prop}[thm]{Proposition}
\newtheorem{cor}[thm]{Corollary}
\theoremstyle{remark}
\newtheorem{remark}{Remark}[section]
\newcommand{\PP}{\mathbb{P}}
\newcommand{\CC}{\mathbb{C}}
\newcommand{\NN}{\mathbb{N}}
\newcommand{\ZZ}{\mathbb{Z}}
\newcommand{\FF}{\mathbb{F}}
\newcommand{\EE}{\mathbb{E}}
\newcommand{\Mnq}{\mathcal{M}_{n,q}}
\newcommand{\Mq}{\mathcal{M}_{q}}
\newcommand{\Pnq}{\mathcal{P}_{n,q}}
\newcommand{\Pq}{\mathcal{P}_{q}}
\newcommand{\Tr}{\mathrm{Tr}}
\newcommand{\glnq}{\mathrm{GL}_n(\mathbb{F}_q)}
\newcommand{\barfq}{\overline{\FF_q}}
\newcommand{\gfnc}{\mathrm{P}_{\mathrm{GL}}}
\newcommand{\ordpa}{\mathrm{ord}_p(a)}
\newcommand{\charfq}{\mathrm{char}(\FF_q)}
\newcommand{\Mqt}{\mathcal{M}^{gl}_q}
\newcommand{\brackets}[1]{\left(#1\right)}
\numberwithin{equation}{section}
\newcommand{\subjclass}[2][2020]{%
	\let\@oldtitle\@title%
	\gdef\@title{\@oldtitle\footnotetext{#1 \emph{Mathematics subject classification:} #2}}%
}
\subjclass{11L40 (Primary), 60B20, 15B52, 05E05}
\begin{document}
\maketitle
\begin{abstract}
Let $g$ be a random matrix distributed according to uniform probability measure on the finite general linear group $\glnq$. We show that  $\Tr(g^k)$ equidistributes on $\FF_q$ as $n \to \infty$ as long as $\log k=o(n^2)$ and that this range is sharp. We also show that nontrivial linear combinations of $\Tr(g^1),\ldots, \Tr(g^k)$ equidistribute as long as $\log k =o(n)$ and this range is sharp as well. 
Previously equidistribution of either a single trace or a linear combination of traces  was only known for $k \le c_q n$, where $c_q$ depends on $q$, due to work of the first author and Rodgers. 

We reduce the problem to exhibiting cancellation in certain short character sums in function fields. For the equidistribution of $\Tr(g^k)$ we end up showing that certain \textit{explicit} character sums modulo $T^{k+1}$ exhibit cancellation when averaged over monic polynomials of degree $n$ in $\FF_q[T]$ as long as $\log k = o(n^2)$. This goes far beyond the classical range $\log k =o(n)$ due to Montgomery and Vaughan. To study these sums we build on the argument of Montgomery and Vaughan but exploit additional symmetry present in the considered sums. 
\end{abstract}

\section{Introduction}

Fix $\FF_q$, the finite field of $q$ elements. We denote its characteristic by $\charfq$. Let $g \in \glnq$ be an invertible $n \times n$ matrix over $\FF_q$ chosen according to the uniform probability measure. 
The first author and Rodgers \cite[Thm.~1.1]{Gorodetsky2021} showed that $\Tr(g^k)$ equidistributes in $\FF_q$ as $n \to \infty$, uniformly for $k \le c_q n$ for some sufficiently small $c_q$ depending on $q$, and the rate of convergence is superexponential. However, nothing beyond $k=O(n)$ was known. We describe our three main results.
\begin{thm}\label{thm:matrixsingle}
Let $g \in \glnq$ be chosen uniformly at random. Let $k = k(n)$ be a positive integer such that $\log k=o(n^2)$. The distribution of $\Tr(g^k)$ tends to the uniform distribution on $\FF_q$ as $n$ tends to $\infty$.
\end{thm}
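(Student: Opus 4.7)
The plan is to convert the equidistribution statement into a bound on character sums via Fourier inversion on $\FF_q$: it is enough to show that
$$\EE_{g \in \glnq}[\psi(\Tr(g^k))] \longrightarrow 0 \quad (n \to \infty)$$
for every nontrivial additive character $\psi$ of $\FF_q$, uniformly for $\log k = o(n^2)$.

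\emph{Reduction to polynomials.} First I would encode $g$ by its reciprocal characteristic polynomial $F_g(T) = \det(I - Tg)$, a monic polynomial with $F_g(0)=1$ of degree $n$. Newton's identities, or equivalently the logarithmic derivative expansion
$$-\frac{F_g'(T)}{F_g(T)} \;=\; \sum_{k \geq 1} \Tr(g^k)\, T^{k-1},$$
show that $\Tr(g^k)$ is an explicit polynomial in the coefficients of $F_g$ modulo $T^{k+1}$. Because the logarithmic derivative turns multiplication into addition, the map $f \mapsto \psi\brackets{-[T^{k-1}](f'/f)}$ is a genuine Dirichlet character $\chi_{\psi,k}$ on $\brackets{\FF_q[T]/T^{k+1}}^{*}$. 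Pushing uniform measure on $\glnq$ forward to characteristic polynomials (with the weights coming from the explicitly known fibers of $g \mapsto F_g$) recasts the target average as a weighted character sum
$$\frac{1}{|\glnq|}\sum_{f \in \Pnq} \#\{g \in \glnq : F_g = f\}\, \chi_{\psi,k}(f).$$

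\emph{Cancellation in the character sum.} The core of the proof is then showing this weighted sum is $o(|\glnq|)$ whenever $\log k = o(n^2)$. In the Polya--Vinogradov / Weil / Montgomery--Vaughan framework applied to arbitrary Dirichlet characters modulo $T^{k+1}$, cancellation is only available in the much shorter range $\log k = o(n)$; this is the barrier the first author and Rodgers hit. The gain in the present setting should come from the fact that $\chi_{\psi,k}$ is far from a generic character of conductor $T^{k+1}$: it belongs to a one-parameter family indexed by $\psi \in \widehat{\FF_q}$, it factors through the highly structured additive homomorphism $f \mapsto [T^{k-1}](f'/f)$, and this homomorphism enjoys natural symmetries (for instance, under the dilations $T \mapsto cT$ with $c \in \FF_q^*$ the value transforms by the scalar $c^k$, and reversal $f \mapsto T^{\deg f}f(1/T)$ gives a second relation). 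My approach would be to open $\bigl|\sum_f \chi_{\psi,k}(f)\bigr|^2$ and carry out the Montgomery--Vaughan mean-value averaging argument, but at each stage use these symmetries to collapse cross terms and enlarge the effective range of square-root cancellation.

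\emph{Main obstacle.} The hardest step will be quantifying precisely how the symmetries of $\chi_{\psi,k}$ translate into numerical improvement inside the Montgomery--Vaughan scheme, and doing so sharply enough to reach $\log k = o(n^2)$ rather than some intermediate range such as $\log k = o(n^{3/2})$. The fact that the paper states the range to be sharp suggests there is essentially no slack: every structural feature of $\chi_{\psi,k}$ must be exploited, and the error terms must be tracked tightly in both $k$ and $n$ at every stage of the argument.
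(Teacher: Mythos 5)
Your reduction to character sums is essentially the paper's: Fourier inversion on $\FF_q$, the observation that $f\mapsto \psi(p_k)$ composed with coefficient reversal is a Dirichlet character modulo $T^{k+1}$ (Lemma~\ref{lem:invchar}), and a Montgomery--Vaughan style treatment of the resulting sum. Two points in the reduction are off or unaddressed, though only mildly: your displayed sum runs over $f\in\Pnq$ (irreducibles), whereas characteristic polynomials range over all of $\Mnq$ with $f(0)\neq 0$ --- and the paper explicitly does \emph{not} know how to get the $o(n^2)$ range for the sum restricted to irreducibles, so this is not a harmless typo if intended; and you do not say how to strip the weights $\#\{g: F_g=f\}$, which the paper does via the convolution identity $\gfnc=\alpha_1*\alpha_2$ (Lemma~\ref{lem:iden}), reducing everything to the unweighted sum $S(n,\chi_{k,\psi})$.

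The genuine gap is in the core cancellation step. The symmetries you propose to exploit --- dilation $T\mapsto cT$ (only $q-1$ of them, far too few to move the range from $o(n)$ to $o(n^2)$) and coefficient reversal (which is already consumed in \emph{defining} the character and yields no further saving) --- are not the ones that make the theorem work, and a second-moment ``open the square'' argument built on them has no identified mechanism for beating the generic conductor barrier. The actual mechanism is different: writing the prime sums via the norm map on $\FF_{q^n}^{\times}$, one finds that $\sum_{f\in\Mnq}\Lambda(f)\chi_{k,\psi}(f)$ depends on $k$ only through $\gcd(k,q^n-1)$ (Lemma~\ref{lem:sym}), which upgrades the Weil bound from $q^{n/2}k$ to $q^{n/2}\gcd(k,q^n-1)$. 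One then runs Montgomery--Vaughan with the set $S$ of degrees $d$ for which $\gcd(k,q^d-1)<q^{d/3}$, and the whole theorem rests on showing this set has large logarithmic measure whenever $\log k=o(n^2)$; that is an arithmetic statement about $\sum_d \log\gcd(k,q^d-1)$ proved via the factorization $q^d-1=\prod_{e\mid d}\phi_e(q)$ and the structure of the sets $\{e: p\mid \phi_e(q)\}$ (Lemmas~\ref{lem:cyc} and~\ref{lem:Blk}). None of this is present or foreshadowed in your proposal, and you flag precisely this step as the part you cannot carry out, so the proof is incomplete at its decisive point.
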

The range $\log k = o(n^2)$ in Theorem \ref{thm:matrixsingle} is optimal in the sense that we cannot replace it with $\log k = O(n^2)$. Indeed, if we take $k = |\glnq|= \prod_{i=0}^{n-1}(q^n-q^i)$ then $\log k  \asymp_q n^2$ and, by Lagrange's theorem, $g^k = I_n$ for $g \in \glnq$ and so $\Tr(g^k) \equiv n$ does not equidistribute. (This shows $\Tr(g^k)$ is periodic in $k$.) Theorem~\ref{thm:matrixsingle} leaves open the question, not explored in this paper, whether the range can be extended to $\log k \le c_q n^2$ for some constant $c_q>0$ depending on $q$; we believe the answer is negative. We also prove a theorem for combination of traces.
\begin{thm}\label{thm:matrix}
Let $g \in \glnq$ be chosen uniformly at random. Let $k = k(n)$ be a positive integer such that $\charfq \nmid k$ and $\log k=o(n)$. Let $a_j \in \FF_q$ for $j = 1,\ldots, k$ be arbitrary constants with $a_k \neq 0$. Then the distribution of $\sum_{1 \le i \le k}a_i \Tr( g^i)$ tends to the uniform distribution on $\FF_q$ as $n$ tends to $\infty$.
\end{thm}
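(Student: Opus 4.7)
The plan is to reduce, via Fourier analysis on $\FF_q$, the equidistribution of $X_n := \sum_{i=1}^k a_i \Tr(g^i)$ to a Dirichlet character sum over monic polynomials in $\FF_q[T]$, and then invoke the classical Montgomery--Vaughan short-character-sum bound, which covers the range $\log k = o(n)$.

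By Weyl's criterion it suffices to show that $\EE[\psi(X_n)] \to 0$ as $n \to \infty$ for every nontrivial additive character $\psi$ of $\FF_q$. Writing $f_g(T) := \det(I_n - Tg)$, the identity
$$-T \frac{f_g'(T)}{f_g(T)} = \sum_{i \geq 1} \Tr(g^i)\, T^i$$
shows that $X_n$ depends on $g$ only through the reduction $f_g \bmod T^{k+1}$. Since the logarithmic derivative is additive under multiplication, the map
$$\chi(h) := \psi\Bigl(\sum_{i=1}^k a_i\, [T^i]\bigl(-Th'(T)/h(T)\bigr)\Bigr)$$
defines a Dirichlet character on $(\FF_q[T]/T^{k+1})^\ast$, and $\psi(X_n) = \chi(f_g)$. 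A direct computation on $h \equiv 1 + cT^k \pmod{T^{k+1}}$ yields $\chi(1 + cT^k) = \psi(-ck a_k)$; under the hypotheses $\charfq \nmid k$ and $a_k \neq 0$ this is a nontrivial function of $c \in \FF_q$, so $\chi$ is primitive of conductor exactly $T^{k+1}$.

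The next step is to express $\EE_g[\chi(f_g)]$ as a character sum over $\Mnq$. Using the cycle-index identity for $\glnq$ --- equivalently, the classical parametrization of conjugacy classes by partition-valued functions on monic irreducibles --- one writes $\EE_g[\chi(f_g)]$ as the coefficient of $u^n$ in an $L$-function-type generating series whose dominant contribution is the short character sum $S_n(\chi) := \sum_{h \in \Mnq,\, T\nmid h} \chi(h)$. Following the reduction carried out in \cite{Gorodetsky2021}, this yields an estimate of the form $|\EE[\psi(X_n)]| \ll q^{-n} |S_n(\chi)| + o(1)$. It then remains to show $S_n(\chi) = o(q^n)$ for a primitive character of conductor $T^{k+1}$ whenever $\log k = o(n)$ --- precisely the classical Montgomery--Vaughan estimate in its function-field formulation.

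The main obstacle lies in the middle step: one must account carefully for the auxiliary Euler factors in the cycle-index generating function (notably at $T$ and at primes dividing the conductor) and verify that they do not spoil the cancellation coming from $S_n(\chi)$. In contrast with Theorem \ref{thm:matrixsingle}, no new character-sum estimate is needed here; the sole role of the hypotheses $\charfq \nmid k$ and $a_k \neq 0$ is to ensure that the Dirichlet character $\chi$ is primitive of full conductor $T^{k+1}$, so that the classical bound can be applied.
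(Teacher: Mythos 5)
Your proposal follows essentially the same route as the paper: Fourier reduction to $\EE_g[\psi(X_n)]$, identification of the relevant function as a primitive Dirichlet character modulo $T^{k+1}$ via the reciprocal-polynomial involution (the paper's Lemma \ref{lem:invchar}, with the same primitivity computation from $\charfq \nmid k$ and $a_k \neq 0$), reduction from the $\glnq$-measure to the plain sum over $\Mnq$ using the multiplicative structure of $\gfnc$ (the paper does this cleanly via the convolution identity $\gfnc = \alpha_1 * \alpha_2$ of Lemma \ref{lem:iden}, which gives $|S(n,\gfnc\cdot\xi_{\mathbf{a},\psi})| \le q^{-n}\sum_{i=0}^{n}|S(i,\iota_{\xi_{\mathbf{a},\psi}})|$ and disposes of your ``auxiliary Euler factors'' concern), and finally the function-field Montgomery--Vaughan bound of Bhowmick--L\^{e} (Lemma \ref{lem:char}). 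The only caveat is that the final step requires the character-sum bound uniformly for all degrees $i$ near $n$, not just $i=n$, but since Lemma \ref{lem:char} is uniform this is immediate.
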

Over $\FF_q$ we have $\Tr(g^{\charfq}) = \Tr(g)^{\charfq}$ and the condition $\charfq \nmid k$ is necessary to avoid trivial linear combinations.
The range $\log k=o(n)$ in Theorem \ref{thm:matrix} is optimal in the sense that it cannot be replaced by $\log k =O(n)$, as we now demonstrate by giving an example where $ \sum_{1 \le i \le k} a_i \Tr(g^i)$ ($ \charfq \nmid k$, $a_k \neq 0$) is not equidistributed and $\log k \asymp_q n$. Let $\barfq$ denote an algebraic closure of $\FF_q$, and let $f \in \FF_q[T]$ be a monic polynomial. Given its factorization $f(T) = \prod_{j=1}^{\deg f} (T-\lambda_j)$ over $\barfq$, we define its $i$th ($i \ge 0$) power sum symmetric polynomial as
\begin{equation}\label{eq:pk}
p_i(f) := \sum_{j=1}^{\deg f} \lambda_j^i \in \barfq.
\end{equation}
By Newton's identities $p_i(f)$ is an integral multivariate polynomial in coefficients of $f$, hence $p_i(f)$ is in fact in $\FF_q$. Moreover, we have $p_i(fg) = p_i(f)+p_i(g)$ for any monic $f,g\in \FF_q[T]$. If $f(T) = \det(I_n T-g)$ is the characteristic polynomial of a matrix $g$, then $p_i(f) = \Tr (g^i)$. 

Let $F(T):=\sum_{i=0}^{k} a_i T^i$ be the product of all monic irreducible polynomials in $\FF_q[T]$ of degree at most $n$ (so $a_k=1$, $a_0=0$). Then, for any $\lambda \in \cup_{j=1}^n \FF_{q^j}$,
\[
F(\lambda) = \sum_{i=1}^{k} a_i \lambda^i= 0.
\]
Hence $\sum_{i=1}^{k} a_i p_i(f) = 0$ for any $f$ with $\deg f \le n$ and

\begin{equation}
    \sum_{i=1}^{k} a_i \Tr(g^i) = \sum_{i=1}^{k} a_i p_i(\det(I_n T-g))=0
\end{equation}
for all $g \in \glnq$. Finally, $k=\deg F  \asymp q^n$ by the Prime Polynomial Theorem (see Lemma \ref{lem:pnq}). If $k$ happens to be divisible by $\charfq$, we can replace $F$ by $TF$. 

The question whether the range of Theorem \ref{thm:matrix} can be extended to $\log k \le c_q n$ for some constant $c_q>0$, for which we believe the answer is negative, remains open and is not explored here.

For certain values of $k$, for example, primes, we can go beyond the range $\log k =o(n^2)$ of Theorem \ref{thm:matrixsingle} using the following arithmetic criterion.
\begin{thm}\label{thm:crit}
Let $g \in \glnq$ be chosen uniformly at random. Let $k = k(n)$ be a positive integer. Suppose 
\begin{equation}\label{eq:sumdiv} \sum_{\substack{12 \log_q n < d\le n \\ \gcd(k,q^d-1) < q^{d/3}}}d^{-1}\to \infty
\end{equation}
as $n \to \infty$. Then, the distribution of $\Tr(g^k)$ tends to the uniform distribution on $\FF_q$ as $n$ tends to $\infty$.
\end{thm}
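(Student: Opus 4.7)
The plan is to reduce to a completely multiplicative character sum over monic polynomials and then apply a Halász-type mean value bound, with the lower bound on the pretentious distance coming from the Weil bound on $\FF_{q^d}$. By Weyl's equidistribution criterion on the finite group $(\FF_q,+)$, it suffices to show that for every nontrivial additive character $\psi:\FF_q \to \CC^{\ast}$, one has $\EE_{g \in \glnq}[\psi(\Tr(g^k))] \to 0$ as $n \to \infty$. First, I would invoke the paper's reduction (presumably set up in the proof of Theorem \ref{thm:matrixsingle}) converting this expectation into a weighted mean value over $\Mnq$ of the completely multiplicative function $\chi := \chi_{\psi,k}: \Mq \to S^1$ defined by $\chi(f) := \psi(p_k(f))$, where complete multiplicativity is immediate from $p_k(fg) = p_k(f) + p_k(g)$. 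A function-field Halász-type estimate then bounds this mean by a quantity of the shape $(1 + D^2) e^{-D^2}$, where
\[D^2 := \sum_{\substack{P \text{ monic irreducible} \\ P \neq T,\, \deg P \le n}} \frac{1 - \Re \chi(P)}{|P|},\]
reducing matters to showing $D^2 \to \infty$ under hypothesis \eqref{eq:sumdiv}.

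Second, I would bound $D^2$ from below by estimating $\sum_{P \text{ irr},\, \deg P = d} \psi(p_k(P))$ for each $d$ in the range $(12 \log_q n, n]$ with $e_d := \gcd(k, q^d-1) < q^{d/3}$. Since $p_k(P) = \Tr_{\FF_{q^d}/\FF_q}(\lambda^k)$ for any root $\lambda$ of $P$, grouping Galois conjugates yields
\[d \sum_{\substack{P \text{ irr} \\ \deg P = d}} \psi(p_k(P)) = \sum_{\substack{\lambda \in \FF_{q^d}^{\ast} \\ [\FF_q(\lambda):\FF_q]=d}} \psi_d(\lambda^k),\]
where $\psi_d := \psi \circ \Tr_{\FF_{q^d}/\FF_q}$ is a nontrivial additive character of $\FF_{q^d}$. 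By inclusion-exclusion on subfields, the right-hand side differs from $\sum_{\lambda \in \FF_{q^d}^{\ast}}\psi_d(\lambda^k)$ by $O(q^{d/2})$; and since $\lambda \mapsto \lambda^k$ is $e_d$-to-$1$ onto the same image as $\lambda \mapsto \lambda^{e_d}$, we have $\sum_{\lambda \in \FF_{q^d}^{\ast}} \psi_d(\lambda^k) = \sum_{\lambda \in \FF_{q^d}^{\ast}} \psi_d(\lambda^{e_d})$. Because $e_d \mid q^d - 1$ forces $\gcd(e_d, \charfq) = 1$, the Weil bound for additive character sums applied to the polynomial $X^{e_d}$ gives $|\sum_\lambda \psi_d(\lambda^{e_d})| \le (e_d - 1)\, q^{d/2}$.

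Combining, and writing $\pi_q(d)$ for the number of monic irreducibles of degree $d$ (so $\pi_q(d) = q^d/d + O(q^{d/2}/d)$ by the Prime Polynomial Theorem), for each $d$ in our range we obtain
\[\frac{1}{\pi_q(d)}\left|\sum_{\substack{P \text{ irr} \\ \deg P = d}} \psi(p_k(P))\right| \ll \frac{e_d}{q^{d/2}} \ll q^{-d/6},\]
which is $o(1)$ uniformly once $d > 12 \log_q n$. Hence each such good $d$ contributes at least $(1 - o(1))\,\pi_q(d)/q^d \ge 1/(2d)$ to $D^2$ for $n$ large, giving $D^2 \gtrsim \sum_{d \text{ good}} 1/d \to \infty$ by \eqref{eq:sumdiv}, as required. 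The main obstacle I expect is the precise statement and derivation of the Halász-type inequality for the specific weighted mean value relevant to $\glnq$-trace expectations: the underlying weight on $\Mnq$ (coming from conjugacy class sizes in $\glnq$) is not uniform, but its local multiplicative structure at each irreducible $P$ should render such an inequality essentially standard within the framework the paper builds for Theorems \ref{thm:matrixsingle} and \ref{thm:matrix}.
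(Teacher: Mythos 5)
Your core estimate is the same as the paper's, but your outer machinery is genuinely different. The heart of your argument --- passing from $\sum_{\deg P=d}\psi(p_k(P))$ to a complete sum over $\FF_{q^d}^{\times}$, observing that $\lambda\mapsto\lambda^k$ and $\lambda\mapsto\lambda^{\gcd(k,q^d-1)}$ have the same image with the same multiplicities, and then invoking Weil to get a saving of $\gcd(k,q^d-1)q^{d/2}$ --- is precisely the paper's Lemma \ref{lem:sym} and Corollary \ref{cor:sym}, and your conclusion that every ``good'' degree $d>12\log_q n$ contributes $\ge(1-o(1))/d$ to the relevant prime sum matches the paper's use of the set $S$ in \eqref{eq:Sdef}. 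Where you diverge is in converting this prime-level cancellation into a mean-value bound: you appeal to a function-field Hal\'asz-type inequality in terms of the pretentious distance $D^2$, whereas the paper runs the Montgomery--Vaughan decomposition \eqref{eq:stra} directly, bounding the ``has a prime factor of degree in $S$'' part by Lemma \ref{lem:MV} plus Corollary \ref{cor:sym} and the complementary part by the self-contained sieve bound of Lemma \ref{lem:sieve}. Your route is viable (such a Hal\'asz theorem over $\FF_q[T]$ exists, e.g.\ Granville--Harper--Soundararajan), but two points need attention: the Hal\'asz distance must be minimized over the archimedean twists $f\mapsto e^{i\theta\deg f}$, not just $\theta=0$ --- harmless here, since your degree-by-degree cancellation bounds $|\sum_{\deg P=d}\chi(P)e^{-i\theta d}|$ uniformly in $\theta$, but it should be said; and the reduction from the $\gfnc$-weighted expectation over $\glnq$ to an unweighted mean over $\Mnq$, which you flag as the main obstacle, is exactly what the paper's Lemma \ref{lem:iden} and \eqref{eq:triang} supply (writing $\gfnc$ as a Dirichlet convolution with the normalized indicator of $T\nmid f$, giving a Ces\`aro average of the plain character sums, i.e.\ \eqref{eq:done2}); one then needs the mean-value bound uniformly for degrees $i$ slightly below $n$, which holds since dropping an $O(1)$ or slowly growing range of $d$ does not affect divergence of \eqref{eq:sumdiv}. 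What the paper's more elementary route buys is a quantitative, fully self-contained bound (Proposition \ref{prop:crit}), which is what powers the sharp $\log k=o(n^2)$ range in Theorem \ref{thm:canc}; what Hal\'asz would buy is a shorter qualitative proof of Theorem \ref{thm:crit} at the cost of importing a substantial black box.
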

As we shall see later, Theorem \ref{thm:matrixsingle} is a consequence of Theorem \ref{thm:crit}.

 \subsection{Comparison with random matrix theory}
 Our investigation was motivated by results in random matrix theory, although we do not use any techniques from this area. Let $\text{U}_n(\CC)$ be the group of $n\times n$ unitary matrices over complex numbers, endowed with Haar measure of total mass 1. A classical result of Diaconis and Shahshahani \cite{Diaconis1994} states that for any $k \ge 1$, the vector $X_k=(\Tr(U),\Tr(U^2)/\sqrt{2},\ldots, \Tr(U^k)/\sqrt{k})$ converges in distribution to $Y_k=(Z_1, Z_2,\ldots, Z_k)$, where $(Z_j)_{j=1}^k$ are independent standard complex Gaussians. Johansson \cite[Thm.~2.6]{Johansson1997} showed that the rate of convergence of a linear combination of $\Tr(U^i)$ to a suitable Gaussian is superexponential in total variation distance. In \cite{johansson2021multivariate}, Johansson and Lambert extended \cite{Johansson1997} to the total variation distance of $X_k$ from $Y_k$ uniformly for $k \ll n^{2/3-\varepsilon}$, and in \cite{courteaut2021multivariate}, Courteaut and Johansson established similar results for orthogonal and symplectic groups. In a recent work, Courteaut, Johansson, and Lambert \cite{courteaut2022berry} studied the convergence of $\Tr(U^k)/\sqrt{k}$ to $Z_k$ as $k$ varies, obtaining, among other results, that the distance goes to $0$ for any $k$ in the range $1 \le k < n$. As for the complementary range, Rains \cite{Rains1997} proved that there is a stabilizing phenomenon once $k \ge n$: The eigenvalues of $U^k$ become distributed as $n$ independent uniform random variables on the unit circle, and in particular $\Tr(U^k)/\sqrt{n}$ tends in distribution to the standard complex Gaussian.

 \subsection{Symmetric function perspective}
Let us also formulate a natural problem in symmetric functions that will turn out to share strong similarities with the equidistribution of $\Tr(g^k)$. Let $e_k(t_1,\ldots,t_n)$ be the $k$th elementary symmetric polynomial
\begin{equation}
    e_k(t_1,\ldots,t_n) := \sum_{1 \le j_1 < j_2<\cdots <j_k \le n} t_{j_1}t_{j_2}\cdots t_{j_k}
\end{equation}
and $p_k(t_1,\ldots,t_n)$ be the $k$th power sum symmetric polynomial
\[p_k(t_1,\ldots,t_k):=\sum_{i=1}^{n} t_i^k.\]
Let $\mathbf{X} = (X_i)_{i=1}^{n}$ be $n$ $\barfq$-valued random variables such that $e_1(\mathbf{X}),\ldots,e_n(\mathbf{X})$ are independent and uniformly distributed on $\FF_q$. By Newton's identities, $p_k(\mathbf{X})$ must also be $\FF_q$-valued. We ask, is $p_k(\mathbf{X})$ close to uniform?

Here is one way to construct such a sequence $\mathbf{X}$. Taking $a_1,\ldots,a_n$ to be independent uniform random variables on $\FF_q$, the polynomial $T^n - a_1T^{n-1}+a_2T^{n-2} \mp \ldots + (-1)^na_n$ is distributed uniformly among $\Mnq \subseteq \mathbb{F}_q[T]$, the subset of monic polynomials of degree $n$. Setting $(X_i)_{i=1}^{n}$ to be its $n$ roots in some order, we have that $e_j(\mathbf{X}) = a_j$ satisfy the conditions above, and $p_i(f)$ defined in \eqref{eq:pk} coincides with $p_i(\mathbf{X})$.
We prove the following.
\begin{thm}\label{thm:symmetric}
Let $n \ge 1$. Let $k = k(n)$ be a positive integer. Suppose $\mathbf{X}=(X_i)_{i=1}^{n}$ are $n$ random variables such that $(e_1(\mathbf{X}),\ldots,e_n(\mathbf{X}))$ has the uniform distribution on $(\FF_q)^n$. Then:
\begin{enumerate}
    \item If $\log k =o(n^2)$ then the distribution of $p_k(\mathbf{X})$ tends to the uniform distribution on $\FF_q$.
    \item Let $a_1,\ldots,a_k \in \FF_q$. If $\log k = o(n)$, $a_k \neq 0$ and $\charfq \nmid k$ then the distribution of $\sum_{i=1}^{k} a_i p_i(\mathbf{X})$ tends to the uniform distribution on $\FF_q$.
    \item If the sum in \eqref{eq:sumdiv} diverges then the distribution of $p_k(\mathbf{X})$ tends to the uniform distribution on $\FF_q$.
\end{enumerate}
\end{thm}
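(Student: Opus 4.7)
The plan is to reduce each of the three parts of Theorem \ref{thm:symmetric} to a cancellation estimate for an explicit character sum over $\Mnq$; the desired cancellation is the main analytic output of the rest of the paper. By Newton's identities each $p_i$ is an integer polynomial in $e_1,\ldots,e_i$, so the joint law of $(p_i(\mathbf{X}))_{i \le k}$ is determined by that of $(e_j(\mathbf{X}))_{j \le n}$. Since the hypothesis puts the latter uniform on $(\FF_q)^n$, I may replace $\mathbf{X}$ by the set of roots in $\barfq$ of a uniformly random $f(T) = T^n - e_1 T^{n-1}+\ldots+(-1)^n e_n \in \Mnq$, so that $p_i(\mathbf{X}) = p_i(f)$. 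This is the key advantage of the symmetric-function setup over the random-matrix one: $f$ is exactly uniform on $\Mnq$, with no weighting by conjugacy class.

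By orthogonality of additive characters of $\FF_q$, each conclusion becomes
\begin{equation}
S_n(\psi) := \frac{1}{q^n}\sum_{f \in \Mnq} \psi\bigl(L(f)\bigr) \longrightarrow 0 \qquad (n \to \infty)
\end{equation}
for every non-trivial additive character $\psi$ of $\FF_q$, with $L(f) = p_k(f)$ for parts (1) and (3) and $L(f) = \sum_{i=1}^k a_i p_i(f)$ for part (2). Setting $\chi(f) := \psi(L(f))$, the additivity $p_i(fg)=p_i(f)+p_i(g)$ makes $\chi$ completely multiplicative on monic polynomials, and the logarithmic-derivative identity $\sum_{i\ge 1} p_i(f)\,T^{i} = -T (f^{\mathrm{rev}})'(T)/f^{\mathrm{rev}}(T)$ in $\FF_q[[T]]$, where $f^{\mathrm{rev}}(T) := T^{\deg f} f(1/T)$ is a unit mod $T^{k+1}$ for monic $f$, shows that $p_1(f),\ldots,p_k(f)$ depend only on $f^{\mathrm{rev}} \bmod T^{k+1}$. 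Thus $\chi$ is the restriction of a Dirichlet character modulo $T^{k+1}$, and $S_n(\psi)$ is exactly the kind of short character sum of high conductor studied in this paper.

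It remains to invoke cancellation in $S_n(\psi)$. For part (2), Newton's recursion shows the coefficient of $e_k$ in $L$ equals $(-1)^{k-1} k a_k$, nonzero in $\FF_q$ under $a_k\neq 0$ and $\charfq\nmid k$; hence $\chi$ is primitive modulo $T^{k+1}$, and the function-field analogue of Montgomery--Vaughan yields $S_n(\psi) = o(1)$ uniformly for $\log k = o(n)$. For parts (1) and (3), one may assume $\charfq\nmid k$: writing $k = p^e k'$ with $\gcd(k',p)=1$, the Frobenius identity $p_k(f) = p_{k'}(f)^{p^e}$ together with the bijectivity of $x \mapsto x^{p^e}$ on $\FF_q$ shows that equidistribution of $p_k(f)$ is equivalent to that of $p_{k'}(f)$. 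One then invokes the paper's sharpened cancellation bounds for the specific character $\psi\circ p_k$ in the ranges $\log k = o(n^2)$ (for (1)) and \eqref{eq:sumdiv} (for (3)).

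The main obstacle is not the reduction itself, which is essentially a dictionary between random symmetric functions and character sums, but the improved character-sum cancellation invoked in parts (1) and (3): gaining a factor $n$ in the log-conductor beyond the Montgomery--Vaughan range requires genuinely exploiting the extra symmetry of $\psi\circ p_k$, and is the principal analytic contribution of the paper.
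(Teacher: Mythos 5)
Your proposal is correct and follows essentially the same route as the paper: reduce to a uniformly random $f\in\Mnq$, apply orthogonality of additive characters of $\FF_q$, recognize $\psi\circ L$ as the pullback under coefficient reversal of a Dirichlet character modulo $T^{k+1}$ (primitive in part (2) by the $(-1)^{k-1}ka_k$ computation, matching Lemma \ref{lem:invchar}), and invoke the Bhowmick--L\^{e}/Montgomery--Vaughan bound for part (2) and Theorem \ref{thm:canc} and Proposition \ref{prop:crit} for parts (1) and (3). The only deviations are cosmetic: your Frobenius reduction to $\charfq\nmid k$ in parts (1) and (3) is valid but unnecessary (those estimates hold for all $k$), and you should make the small bookkeeping step of passing from the short-interval character on $\Mnq$ to the genuine Dirichlet character sum via the reversal bijection on polynomials coprime to $T$, as in \eqref{eq:sscon2}, explicit before quoting the cancellation bounds.
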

Theorem~\ref{thm:symmetric} is about $p_k(f)$ when we choose $f$ uniformly at random from $\Mnq$, while Theorems~\ref{thm:matrixsingle}--\ref{thm:crit} are about $p_k(f)$ for a polynomial $f$ drawn from the space of possible characteristic polynomials of a matrix from $\glnq$ endowed with the uniform measure. As proved in \cite[Thm.~1.4]{Gorodetsky2021}, the total variation distance of the law of the first $k$ next-to-leading coefficients of $\det(I_n T-g)$  from the uniform distribution on $\FF_q^{k}$ tends to $0$ as $n \to \infty$ for $k$ as large as $n-o(\log n)$, so the setup of Theorem~\ref{thm:symmetric} is not that different from the setup of Theorems~\ref{thm:matrixsingle}--\ref{thm:crit}.

\subsection{Cancellation in character sums of high conductor}\label{sec:char}
Given $n \ge 0$ we denote by $\Mnq\subseteq \FF_q[T]$ the subset of monic polynomials of degree $n$. We denote by $\Mq=\cup_{n \ge 0} \Mnq$ the set of monic polynomials in $\FF_q[T]$. We denote by $\Pnq \subseteq \Mnq$ the set of irreducible polynomials of degree $n$ and let $\Pq = \cup_{n \ge 0} \Pnq$. Throughout, the letter $P$ is reserved for elements of $\Pq$.

Given $k \ge 1$ and a nontrivial additive character $\psi \colon \FF_q \to \CC$, we define a function $\chi_{k,\psi}\colon \Mq \to \CC$ by 
\[\chi_{k,\psi}(f) := \begin{cases} \psi(p_{-k}(f)), & \text{if $(f,T)=1$},\\
0, & \text{otherwise.}
\end{cases}
\]
\begin{remark}\label{rem:negative}
	If $i$ is a negative integer, then $p_i(f)$, as defined in  \eqref{eq:pk}, is well defined as long as $T \nmid f$. Moreover, the usual properties are preserved: $p_i(fg)=p_i(f)+p_i(g)$ if $T\nmid fg$; if $f(T)=\deg(I_n T-g)$ then $p_i(f)=\Tr(g^i)$. When $T \nmid f$, $p_i(f)=p_{-i}( f(1/T)T^{\deg f}/f(0))$, where $f(1/T)T^{\deg f}/f(0)$ is a monic polynomial, proving $p_i(f) \in \FF_q$ as well.
\end{remark}
We show in Lemma \ref{lem:invchar} that the function $\chi_{k,\psi}$ is a primitive Dirichlet character modulo $T^{k+1}$. The following theorem is the main component behind the proof of  Theorem \ref{thm:matrixsingle}.
\begin{thm}\label{thm:canc}
We have, uniformly for $n\ge 1$ and $k \ge 1$,
 \[ q^{-n}\sum_{f \in \Mnq} \chi_{k,\psi}(f) \ll\frac{1+ \log_q n +\sqrt{\log_q k}}{n}.\]
 In particular, we have cancellation when $\log k = o(n^2)$ as $n \to \infty$.
\end{thm}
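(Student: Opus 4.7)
My plan is to translate the bound into one on a coefficient of an $L$-function and then analyze this coefficient by exploiting the reciprocal duality $p_{-k}(f) = p_k(\tilde{f})$ from Remark~\ref{rem:negative}, where $\tilde{f}(T) := T^{\deg f} f(1/T)/f(0)$. Granting Lemma~\ref{lem:invchar}, the function $\chi_{k,\psi}$ is a primitive Dirichlet character modulo $T^{k+1}$, so its $L$-function
\[ L(u,\chi_{k,\psi}) \;=\; \prod_{P \neq T}\brackets{1 - \chi_{k,\psi}(P) u^{\deg P}}^{-1} \]
is a polynomial in $u$ of degree $k$, and the sum of interest equals $c_n := [u^n] L(u, \chi_{k,\psi})$. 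Using $\chi_{k,\psi}(P) = \psi(p_k(\tilde P))$ and the involution $P \mapsto \tilde P$ on $\Pq \setminus \{T\}$, I would first rewrite the Euler product as $\prod_{Q \neq T}(1 - \psi(p_k(Q))\, u^{\deg Q})^{-1}$, exhibiting each Euler factor as an explicit additive character of a Frobenius trace.

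Next I extract power sums by taking logarithms: writing
\[ \log L(u, \chi_{k,\psi}) \;=\; \sum_{j \ge 1}\frac{u^j}{j}\, \Lambda_\chi(j), \qquad \Lambda_\chi(j) := \sum_{\substack{P \neq T,\, m \ge 1 \\ m \deg P = j}} (\deg P)\, \psi(m\, p_k(P)), \]
Newton's identity yields the linear recursion $n\, c_n = \sum_{j=1}^{n} \Lambda_\chi(j)\, c_{n-j}$, reducing the problem to mean-value estimates for $\Lambda_\chi(j)$. The inner sum at degree $d$ unfolds, via Galois orbits, to $(1/d)\sum_{\alpha}\psi(m\, \Tr_{\FF_{q^d}/\FF_q}(\alpha^{k}))$ ranging over $\alpha \in \FF_{q^d}^\times$ of degree exactly $d$; substituting $\beta = \alpha^k$ converts this, up to a multiplicity of $\gcd(k, q^d-1)$, into an additive character sum on the subgroup of $k$-th powers of $\FF_{q^d}^\times$, and Weil-type bounds yield square-root cancellation whenever this subgroup is sufficiently large---the exceptional $d$'s being exactly those excluded by \eqref{eq:sumdiv}.

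To obtain the bound $\ll (1 + \log_q n + \sqrt{\log_q k})/n$, I would apply a weighted Cauchy--Schwarz to the Newton recursion, splitting off the $1/n$ prefactor and pairing a mean-square estimate of the form $\sum_{j \le n} |\Lambda_\chi(j)|^2/j \ll \log_q n + \log_q k$ against $\sum_{j} j\, |c_{n-j}|^2 \ll q^{2n}$; the second factor follows from a Parseval identity for $L(u,\chi_{k,\psi})$ evaluated on $|u| = 1/\sqrt{q}$, where the zeros of $L$ lie by the Riemann hypothesis for function field $L$-functions. The Montgomery--Vaughan refinement enters at the level of the mean-square estimate: instead of summing individual Weil-type bounds on $\Lambda_\chi(j)$ term by term---which would give a spurious loss polynomial in $\log k$---one folds the sum using the reciprocal symmetry $f \leftrightarrow \tilde f$ to recover only a logarithmic loss.

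The main obstacle will be establishing the mean-square estimate with the correct dependence on $n$ and $k$ uniformly. The pointwise Weil bound $|\Lambda_\chi(j)| \le k q^{j/2}$ is far too weak when $k$ is large, so the gains have to come from averaging while simultaneously controlling the prime-power contributions ($m \ge 2$), the degrees $d$ for which $\gcd(k, q^d-1)$ is close to $q^d - 1$, and the cross-terms arising in Cauchy--Schwarz. Once this mean-square input is in place, the final rate follows by a direct iteration of Newton's recursion, with $\log_q n$ coming from the number of iterations and $\sqrt{\log_q k}$ from the mean-square bound.
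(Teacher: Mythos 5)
Your opening moves are sound and match the paper: identifying the sum as a coefficient of $L(u,\chi_{k,\psi})$, passing to the prime sums $\sum_{f\in\mathcal{M}_{d,q}}\Lambda(f)\chi_{k,\psi}(f)$, and exploiting the substitution $\beta=\alpha^k$ on $\FF_{q^d}^{\times}$ to improve the Weil bound from $q^{d/2}k$ to $q^{d/2}\gcd(k,q^d-1)$ is exactly the paper's Lemma \ref{lem:sym} and Corollary \ref{cor:sym}. But the mechanism you propose for assembling these into a bound on $S(n,\chi_{k,\psi})$ does not work. The mean-square estimate $\sum_{j\le n}|\Lambda_\chi(j)|^2/j\ll \log_q n+\log_q k$ is false under any reasonable normalization: for a degree $d$ with $q^d-1\mid k$ (and such $d$ can exist up to size roughly $\sqrt{\log_q k}$) one has $\gamma^{-k}=1$ for all $\gamma\in\FF_{q^d}^{\times}$, so $\Lambda_\chi(d)=q^d+O(q^{d/2})$ with \emph{no} cancellation whatsoever, and even after dividing by $q^{d/2}$ this single term contributes $q^{d}/d$ to your mean square. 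No Cauchy--Schwarz or ``folding'' can repair this, because the obstruction is not an inefficiency in summing Weil bounds --- the prime sums at bad degrees are genuinely of full size. Consequently the Newton-recursion route, which weights every degree $j\le n$ linearly, cannot absorb the bad degrees, and your iteration does not close.

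What the paper does instead, and what is missing from your proposal, is the actual Montgomery--Vaughan decomposition at the level of the \emph{multiplicative} structure: factor $F_{\NN}(u,\alpha)=F_{S^c}(u,\alpha)F_S(u,\alpha)$ where $S$ is the set of good degrees ($\gcd(k,q^d-1)<q^{d/3}$, $d>12\log_q n$), bound $[u^n]F_{S^c}(F_S-1)$ using the improved Weil bound only on degrees in $S$ (Lemma \ref{lem:MV}), and bound $[u^n]F_{S^c}$ --- the polynomials all of whose prime factors have bad degree --- by a sieve estimate $\ll q^n\exp(-\sum_{d\in S}d^{-1})$ (Lemma \ref{lem:sieve}). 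The bad degrees are thus never estimated analytically; they are neutralized combinatorially by the rarity of $S^c$-smooth polynomials. You would then still need the second missing ingredient: a quantitative bound showing $S^c$ is sparse, namely $\sum_{L\le d<2L}\log\gcd(k,q^d-1)\ll L\sqrt{\log k\log q}$ (Lemma \ref{lem:Blk}, proved via the factorization $q^d-1=\prod_{e\mid d}\phi_e(q)$ and the structure of the sets $\{e:p\mid\phi_e(q)\}$ as geometric progressions). This is precisely where the $\sqrt{\log_q k}$ in the theorem comes from; your proposal asserts the exceptional degrees are ``exactly those excluded by \eqref{eq:sumdiv}'' but supplies no way to count them. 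As a minor point, your Parseval-on-$|u|=1/\sqrt q$ justification for $\sum_j j|c_{n-j}|^2\ll q^{2n}$ is also off ($|L(u,\chi)|$ can be as large as $2^{d(\chi)}$ there), though the trivial bound $|c_j|\le q^j$ already gives what you wanted.
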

Note that the range $\log k = o(n^2)$ is optimal because for $k=\prod_{i=1}^{n}(q^i-1)$ we have $\gamma^{-k}=1$ for every $\gamma \in \cup_{i=1}^{n} \FF_{q^i}^{\times}$ and thus $p_{-k}(f) \equiv n$ on $\{f \in \Mnq : T \nmid f\}$. Hence, there is no cancellation for such $k$. We do not know if cancellation for $\log k=o(n^2)$ persists if we restrict to $\Pnq$ instead of $\Mnq$; the proof of Theorem \ref{thm:canc} relies on summing over \textit{all} degree-$n$ polynomials. 

It is instructive to compare Theorem \ref{thm:canc} to results about character sums for integers and polynomials.
We switch temporarily to the integer setting. Let $\chi$ be a nonprincipal Dirichlet character modulo $m$ and consider the sum $S_{x,\chi}:=\sum_{n \le x} \chi(n)$ as $x \to \infty$. Montgomery and Vaughan \cite[Lem.~2]{Montgomery1977} proved under the generalized Riemann hypothesis for $L(s,\chi)$ that if $m \ge x$ and $y \in [(\log m)^4,x]$ is a parameter, then 
\begin{equation}\label{eq:MV} S_{x,\chi} =\sum_{\substack{n\le x \\ p \mid n \implies p \le y}} \chi(n)+ O(xy^{-1/2}(\log m)^4).
\end{equation}
Recall $\sum_{n \le x: \, p \mid n \implies p \le y}1$ is $o(x)$ if $\log x/\log y \to \infty$ \cite[eq.~(1.9)]{deb}.
Taking $y = (\log m)^9$, we see that $S_{x,\chi}$ exhibits cancellation if $\log \log m=o(\log x)$. Granville and Soundararajan improved  the error term in \eqref{eq:MV} and also showed that this range is optimal \cite[Cor.~A]{GS}, in the sense that for any given $A>0$ and for any prime $m$, there exists a nonprincipal character $\chi \bmod m$ with $|S_{x,\chi}| \gg_A x$, where $x = \log^ A m$.

Much less is known unconditionally. Burgess proved $S_{x,\chi}$ exhibits cancellation when $m\le x^{3-\varepsilon}$, and this can be  extended to $m \le x^{4-\varepsilon}$ if $m$ is cubefree \cite{Burgess1,Burgess2}. When the conductor is smooth, better results exist \cite[Ch.~12]{IK}. In particular, if $m=p^r$, then Banks and Shparlinski \cite{Banks} showed one has cancellation in the range  $\log m = o( (\log x)^{3/2})$ (if $r \ge C$ and $p\le x^c$ for some absolute $C>0$ and $c>0$); this improved earlier work of Postnikov  \cite[Thm.~12.16]{IK}. It is worthwhile to recall  $\sum_{n \le x}n^{it}$ exhibits cancellation when $\log (|t|+2) =o( (\log x)^{3/2})$, a result due to Vinogradov \cite[Cor.~8.26]{IK}. The bound we prove for the sum of $f\mapsto \chi_{k,\psi}(f)$ also holds for the function $f\mapsto \psi(p_k(f))$, which is an analogue of $n\mapsto n^{it}$, see Remark \ref{rem:kminusk} and \cite[Lem.~2.2]{Gorodetsky2021}.

Now let us return to the polynomial setting. The generalized Riemann hypothesis in $\FF_q[T]$ is a seminal theorem due to A.~Weil \cite{Weil}. Let $\chi$ be a nonprincipal  Dirichlet character modulo a polynomial $Q \in \FF_q[T]$.
In \cite[Thm.~3]{Bhowmick2015}, Bhowmick and L\^{e} adapted \eqref{eq:MV} to the polynomial setting, proving unconditionally that
\begin{equation}\label{eq:bho3} \sum_{f \in \Mnq} \chi(f) = \sum_{\substack{f \in \Mnq\\ P \mid f \implies \deg P \le m}} \chi(f) + O\left( q^{n-\frac{m}{2}}\deg Q \right)
\end{equation}
holds for $m \in [2\log_q \deg Q ,n]$. Taking $m=\lceil 4\log_q \deg Q \rceil$ shows that, as $n \to \infty$, $\sum_{f \in \Mnq} \chi(f)$ exhibits cancellation when $\log_q \deg Q = o(n)$. (For a self-contained bound on the sum in the right-hand side of \eqref{eq:bho3}, see Lemma \ref{lem:sieve}.)

The range $\log k=o(n^2)$ in Theorem \ref{thm:canc} is far beyond the ranges that the generalized Riemann hypothesis implies, and heavily exploits a special symmetry satisfied by $\chi_{k,\psi}$, which is shown in Lemma \ref{lem:sym}. We are not aware of any other \textit{explicit} family of characters, in either integers or polynomials, where the generalized Riemann hypothesis implies cancellation when the conductor exceeds the Montgomery--Vaughan range $\log \log m = o(\log x)$ in $\ZZ$ or $\log \deg Q = o(n)$ in $\FF_q[T]$.

In \S\ref{sec:app} we prove a new bound on general character sums in function fields which is not used in the paper.
\subsection{Structure of the paper}

In Section \ref{sec:reductions}, we discuss the connection between the distribution of traces and character sums in more detail, and bound the total variation distance between our respective distributions and the uniform distribution by corresponding character sums. By doing so, we reduce Theorem \ref{thm:matrixsingle} and the first part of Theorem \ref{thm:symmetric} to Theorem \ref{thm:canc}, and Theorem \ref{thm:matrix} and the second part of Theorem \ref{thm:symmetric} to obtaining bounds for the character sum in \eqref{eq:bho3}. The latter is quite straightforward as shown in Lemma \ref{lem:char}, while the former requires more careful treatment. We prove Theorem \ref{thm:canc} in Section \ref{sec:canc}; we consider this to be the technical part of the paper. 
We start with observing that there is underlying symmetry in sums of $\chi_{k,\psi}$ against primes. To see this, in Lemma \ref{lem:sym}, we prove
\[ \sum_{f \in \Mnq} \Lambda(f)\chi_{k,\psi}(f) =\sum_{f \in \Mnq} \Lambda(f)\chi_{k^\prime,\psi}(f),\]
where $\Lambda(f)$ is the function field von Mangoldt function, and $k^\prime = \gcd(k,q^n-1)$. This identity allows us to improve the Weil bound
\[ \bigg|\sum_{f \in \Mnq} \Lambda(f)\chi_{k,\psi}(f)\bigg| \le q^{\frac{n}{2}} k \]
to
\begin{equation}\label{eq:impweil}
\bigg|\sum_{f \in \Mnq} \Lambda(f)\chi_{k,\psi}(f)\bigg| \le q^{\frac{n}{2}}k^\prime =  q^{\frac{n}{2}}\gcd(k,q^n-1)
\end{equation}
as shown in Corollary \ref{cor:sym}. This is helpful when $\gcd(k,q^n-1)$ is small. Motivated by Montgomery and Vaughan \cite[Lem.~2]{Montgomery1977} given $S \subseteq \{1,\ldots,n\}$, we write
\begin{equation}\label{eq:stra}
q^{-n} \sum_{f \in \Mnq}\chi_{k,\psi}(f)=q^{-n} \sum_{\substack{f \in \Mnq\\ P \mid f \implies \deg P \not\in S}} \chi_{k,\psi}(f)+q^{-n} \sum_{\substack{f\in \Mnq\\\exists P \mid f\text{ such that }\deg P \in S}}\chi_{k,\psi}(f).
\end{equation}
In our case, choosing
\[
S=\{\lceil 12\log_qn \rceil \le d \le n: \gcd(k,q^d-1)<q^{d/3}\}
\]
allows us to bound the second sum on the right-hand side of \eqref{eq:stra} using \eqref{eq:impweil} (see Lemma \ref{lem:MV}) while a sieve bound (Lemma \ref{lem:sieve}) bounds the first sum on the right-hand side of \eqref{eq:stra}. This strategy ends up proving the following criterion.
\begin{prop}\label{prop:crit}
Let $n \ge 1$ and $k \ge 1$. Let $\psi\colon \FF_q \to \CC^{\times}$ be a nontrivial additive character. We have
\begin{equation}\label{eq:crit} q^{-n}\sum_{f \in \Mnq} \chi_{k,\psi}(f) \ll n^{-1} + \exp\bigg( - \sum_{\substack{12\log_q n < d \le n \\ \gcd(k,q^d-1)<q^{d/3}}} d^{-1}\bigg).
\end{equation}
In particular, a sufficient criterion for cancellation is that the sum in \eqref{eq:sumdiv} diverges as $n \to \infty$.
\end{prop}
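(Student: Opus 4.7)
The plan is to use the decomposition \eqref{eq:stra} with the specific choice
\[ S := \{\lceil 12 \log_q n \rceil \le d \le n : \gcd(k, q^d - 1) < q^{d/3}\}, \]
and bound the two resulting sums separately using Lemma \ref{lem:sieve} for the $S$-smooth part and Lemma \ref{lem:MV} for the $S$-rough part, exactly as the roadmap in the paper suggests.

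For the $S$-smooth sum, the trivial bound $|\chi_{k,\psi}(f)| \le 1$ reduces the problem to counting $f \in \Mnq$ all of whose prime divisors have degree outside $S$, and the sieve estimate (Lemma \ref{lem:sieve}), a function-field Mertens-type argument, gives
\[ |\{f \in \Mnq : P \mid f \implies \deg P \notin S\}| \ll q^n \exp\Big(-\sum_{d \in S} d^{-1}\Big). \]
The lower bound $d \ge 12 \log_q n$ in the definition of $S$ keeps the second-order Mertens errors summable, so that $\prod_{d \in S}(1-1/d)$ is comparable to $\exp(-\sum_{d \in S} 1/d)$; this supplies the exponential term in \eqref{eq:crit}.

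For the $S$-rough sum I would apply Lemma \ref{lem:MV}, whose proof follows the Montgomery--Vaughan strategy adapted to $\FF_q[T]$. Starting from the log identity $n = \deg f = \sum_{P^a \| f} a \deg P$, one rewrites $n$ times the rough sum as a double sum over primes $P$ and their co-factors and splits according to whether $\deg P \in S$. For $d \in S$ the improved Weil bound \eqref{eq:impweil} from Corollary \ref{cor:sym} yields $|\sum_{f \in \mathcal{M}_{d,q}} \Lambda(f) \chi_{k,\psi}(f)| \le q^{d/2} \gcd(k, q^d - 1) \le q^{5d/6}$, and summing $q^n \sum_{d \in S} q^{-d/6}$ is $O(q^n / n^2)$ thanks to the lower bound on $d$, which is already comfortably within the target. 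The factor $n^{-1}$ in \eqref{eq:crit} then arises after dividing by the $n$ from the log identity.

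The main technical obstacle inside Lemma \ref{lem:MV} is in fact handling the contribution from primes $P$ with $\deg P \notin S$, where improved Weil provides no gain over the trivial bound. Here the Montgomery--Vaughan framework is essential: that contribution must be recursed back either into a shorter rough sum (to be bounded inductively) or into a sieve count controllable by Lemma \ref{lem:sieve}. Granting the two lemmas, combining their bounds yields \eqref{eq:crit}. The \emph{in particular} conclusion is then immediate: if the sum in \eqref{eq:sumdiv} diverges, then $\exp(-\sum_{d \in S} d^{-1}) \to 0$ as $n \to \infty$, and together with $n^{-1} \to 0$ this forces the right-hand side of \eqref{eq:crit} to tend to $0$.
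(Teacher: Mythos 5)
Your proposal matches the paper's proof essentially verbatim: the same choice of $S$, the same decomposition \eqref{eq:stra}, Lemma \ref{lem:sieve} for the $S$-smooth part, and Lemma \ref{lem:MV} combined with Corollary \ref{cor:sym} (giving $q^{5d/6}$ for $d\in S$) for the $S$-rough part, so it is correct. The only slightly muddled detail is the bookkeeping of the $n^{-1}$ term: in Lemma \ref{lem:MV} it arises as $\exp(A_1)\cdot(\exp(A_2)-1)\ll n\cdot q^{-m/6}/m\ll n^{-1}$, i.e.\ the factor $\exp(\sum_{d\in S^c}d^{-1})\ll n$ from the smooth cofactors times the $n^{-2}$ gain from $d\ge 12\log_q n$, rather than from "dividing by the $n$ from the log identity."
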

In Section \ref{sec:reductions}, we reduce Theorem \ref{thm:crit} and the third part of Theorem \ref{thm:symmetric} to Proposition \ref{prop:crit}. To deduce Theorem \ref{thm:canc} from Proposition \ref{prop:crit}, we prove a sharp upper bound on $\sum_{L<d \le 2L} \log \gcd(k,q^d-1)$ in Lemma \ref{lem:Blk}, that may be of independent interest.

The criterion above is particularly interesting because it allows us to exhibit cancellation on the left-hand side of \eqref{eq:crit} for arbitrarily large $k$, for example, whenever $k$ is a prime, or a product of a bounded number of primes, the sum in \eqref{eq:sumdiv} diverges simply because the following shorter sum does:
\[ \sum_{\substack{12 \log_q n < d\le n \\ \gcd(k,q^d-1)=1}} d^{-1}.\]
\begin{remark}
Proposition \ref{prop:crit}, and hence Theorem \ref{thm:canc}, apply as is to $\sum_{f \in \Mnq} \mu(f)\chi_{k,\psi}(f)$ where $\mu$ is the M\"obius function, see Remark \ref{rem:mo}. Recall $\mu$ is multiplicative with $\mu(P)=-1$ and $\mu(P^e)=0$ for $e \ge 2$. However, we do not know that $\log k =o(n^2)$ is optimal in this case.
\end{remark}
\begin{remark}
For every $\varepsilon>0$, there are examples where $\log k \sim \varepsilon n^2$ for which Proposition \ref{prop:crit} does not yield cancellation, for example, $k=\prod_{i=1}^{\lfloor \varepsilon' n\rfloor}(q^i-1)$ where taking logarithm shows $\varepsilon'>0$ is defined via $\varepsilon=\varepsilon'^2 (\log q)/2$.
\end{remark}
\begin{remark}
It is natural to ask whether analogues of Theorems \ref{thm:matrixsingle} and \ref{thm:matrix} hold for other groups. Consider for instance $\mathrm{U}(n,q)$, the unitary group over $\mathbb{F}_{q^2}$. Then, as in Section \ref{sec:reductions}, we are led to consider the sums $\sum_{f\in \mathcal{M}_{n,q^2}} \mathrm{P}_{\mathrm{U}}(f) \chi(f)$ where $\mathrm{P}_{\mathrm{U}}(f):= \PP_{g \in \mathrm{U}(n,q)}( \det(I_n T-g) = f)$ is a function studied in \cite[\S5]{Gorodetsky2021} and $\chi$ is a Dirichlet character depending on $k$ (if we are in the situation of Theorem~\ref{thm:matrixsingle}) or on $a_1,\ldots,a_k$ (if we are in the situation of Theorem~\ref{thm:matrix}). The support of $\mathrm{P}_{\mathrm{U}}$ is restricted since the characteristic polynomial of any matrix from $\mathrm{U}(n,q)$ is \textit{unitary self-reciprocal}, that is, it belongs to
\[\mathcal{M}_{n,q^2}^{usr}:=\left\{ f \in \mathcal{M}_{n,q^2}: f(0)\neq 0,\, f(T) = T^{\deg f} \frac{\sigma(f(1/T))}{\sigma(f(0))}\right\}, \qquad \sigma\big(\sum c_i T^i\big) := \sum c_i^q T^i.\]
Using the strategy in Section \ref{sec:reductions} together with \cite[Thm.~5.10]{Gorodetsky2021}, the  problem ultimately reduces to bounding character sums over $\mathcal{M}_{n,q^2}^{usr}$, $\sum_{f \in \mathcal{M}_{n,q^2}^{usr}} \chi(f)$. 	We expect that it is possible to adapt our proofs to bound such sums with additional work, although various complications arise, and this problem deserves further study.  For a review of character sums over such sets, we refer the reader to \cite[\S5]{Gorodetsky2021}.
\end{remark}

\section{Reductions}\label{sec:reductions}
\subsection{An involution}
Given $f \in \Mq$ we use the notation $p_i(f)$ introduced in \eqref{eq:pk}.
Given $a_1,\ldots,a_k \in \FF_q$ and an additive character $\psi \colon \FF_q \to \CC^{\times}$, define a function $\xi_{\mathbf{a},\psi}\colon \Mq \to \CC$ by 
\[ 
\xi_{\mathbf{a},\psi}(f) := \psi\bigg(\sum_{i=1}^{k} p_i(f) a_i\bigg).
\]
The function $\xi_{\mathbf{a},\psi}$, sometimes called a short interval character, is closely related to a certain Dirichlet character modulo $T^{k+1}$ as observed by Hayes \cite[pp.~115--116]{hayes1965distribution} and Keating and Rudnick \cite[\S4.1]{KR1}\cite[p.~389]{KR2}. Let us explain this idea. We let $\Mqt:=\{ f \in \Mq : (f,T)=1\}$ (this notation, borrowed from \cite[\S3.1]{Gorodetsky2021}, is motivated by the fact that the characteristic polynomial of any matrix from $\cup_{n\ge 0}\glnq$ lies in $\Mqt$). We define an involution $\iota$ on $\Mqt$ by
\[\iota(f) := f(1/T)T^{\deg f}/f(0).\]
If we factorize $f\in \Mqt$ as $f(T)=\prod_{j=1}^{\deg f}(T-\lambda_j)$ over $\barfq$, we find that
\[ \iota(f) = \prod_{j=1}^{\deg f}(T-\lambda_j^{-1})\]
so $p_k(f) = p_{-k}(\iota(f))$ (see Remark~\ref{rem:negative} for the definition of $p_{-k}$).
For any function $\alpha \colon \Mq\to \CC$, we define $\iota_\alpha \colon \Mq \to \CC$ by
\[\iota_\alpha(f) := \begin{cases} \alpha(\iota(f)) & \text{if } f \in \Mqt,\\
0 & \text{if }f \not\in\Mqt. 
\end{cases}
\]
We say that a function $\alpha \colon \Mq \to \CC$ is completely multiplicative if $\alpha(fg)=\alpha(f)\alpha(g)$ holds for all $f,g \in \Mq$ and $\alpha(1)=1$. If $\alpha$ is completely multiplicative, then so is $\iota_\alpha$ because $\iota$ is (when extended to $\Mq$ via $\iota(T)=0$).
Given an arithmetic function $\alpha \colon \Mq \to \CC$, we use the notation
\[ S(n,\alpha) := \sum_{f \in \Mnq} \alpha(f).\]
Letting 
\[ \chi_{0}(f) := \mathbf{1}_{T \nmid f},\]
we have
\begin{align}\label{eq:sscon}
S(n,\alpha \cdot \chi_{0}) = S(n,\iota_\alpha)
\end{align}
and, since $S(n,\alpha \cdot \chi_{0}) = S(n,\alpha)-\alpha(T)S(n-1,\alpha)\mathbf{1}_{n \ge 1}$ holds for completely multiplicative $\alpha$,
\begin{align}\label{eq:sscon2}
 S(n,\alpha) = \sum_{i=0}^{n} S(i,\alpha \cdot \chi_{0}) \alpha(T)^{n-i}= \sum_{i=0}^{n} S(i,\iota_\alpha)\alpha(T)^{n-i}.
\end{align}
A variant of the following lemma was established in \cite[Lem.~2.2]{Gorodetsky2021}.
\begin{lem}\label{lem:invchar}
Let $a_1,\ldots,a_k \in \FF_q$ with $a_k \neq 0$ and $\charfq \nmid k$. Let $\psi\colon \FF_q \to \CC^{\times}$ be a nontrivial additive character, and let $\alpha = \xi_{\mathbf{a},\psi}$. Then, $\iota_\alpha$ is a primitive Dirichlet character modulo $T^{k+1}$.
\end{lem}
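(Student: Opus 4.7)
My plan is to verify that $\iota_\alpha$ satisfies the four defining features of a primitive Dirichlet character modulo $T^{k+1}$: complete multiplicativity; vanishing exactly on $\{f \in \Mq : T \mid f\}$; dependence on $f$ only through $f \bmod T^{k+1}$; and non-factorization through any character modulo $T^k$.

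Complete multiplicativity of $\alpha = \xi_{\mathbf{a},\psi}$ follows from additivity of each $p_i$ (noted in the introduction), and extends to $\iota_\alpha$ since $\iota$ is completely multiplicative on $\Mqt$ (extended by $\iota(T)=0$). The vanishing condition is immediate from the definition of $\iota_\alpha$. For periodicity, I would use the identity $f(T)/f(0) = \prod_j(1-\lambda_j^{-1}T)$ for $(f,T)=1$ noted in the text: Newton's identities then express $p_{-i}(f) = p_i(\iota(f))$, for $1 \le i \le k$, as $\ZZ$-polynomial expressions in the coefficients of $f(T)/f(0)$ through degree $i$ (no divisions needed, since the Newton recursion solves each $p_i$ over $\ZZ$ in $e_1,\dots,e_i$). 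Since $f \bmod T^{k+1}$ determines $f(T)/f(0) \bmod T^{k+1}$, the value $\iota_\alpha(f) = \psi\bigl(\sum_{i=1}^{k} a_i p_{-i}(f)\bigr)$ depends only on $f \bmod T^{k+1}$.

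For primitivity I would build explicit test polynomials: for each $a \in \FF_q^\times$ take $f = T^{k+1} + aT^k + 1 \in \Mq$, which is monic, coprime to $T$, and satisfies $f \equiv 1 \pmod{T^k}$ with $f \equiv 1 + aT^k \pmod{T^{k+1}}$. Writing $f/f(0) = \prod_j(1-\mu_j T)$ with $\mu_j = \lambda_j^{-1}$, the shape of $f/f(0)$ forces $e_1(\mu)=\cdots=e_{k-1}(\mu)=0$ and $e_k(\mu) = (-1)^k a$, so Newton's identity collapses to $p_k(\mu) = (-1)^{k-1} k\, e_k(\mu) = -ka$, giving $p_{-k}(f) = -ka$ while $p_{-i}(f)=0$ for $1 \le i < k$. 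Therefore $\iota_\alpha(f) = \psi(-k\, a\, a_k)$. The hypotheses $\charfq \nmid k$ and $a_k \ne 0$ make $-k\,a_k$ nonzero in $\FF_q$, so $a\mapsto -k\,a\,a_k$ is a bijection of $\FF_q^\times$; nontriviality of $\psi$ then ensures some $a$ with $\iota_\alpha(f)\ne 1$, and since this $f$ is congruent to $1$ modulo $T^k$, the character cannot factor through modulus $T^k$.

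The one delicate step is the Newton-identities computation supporting primitivity: the hypotheses $\charfq \nmid k$ and $a_k \ne 0$ are used precisely at the end to ensure the sole surviving term $a_k p_{-k}(f)$ in the exponent is a nonzero scalar multiple of $a$, so that varying $a$ exhibits a value where $\psi$ departs from $1$; dropping either hypothesis genuinely kills this contribution, consistent with the paper's remark that the result (and the associated equidistribution) fails in those cases.
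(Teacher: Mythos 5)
Your proof is correct and follows essentially the same route as the paper: complete multiplicativity, the Newton's-identities/coefficient-reversal argument for dependence only on $f \bmod T^{k+1}$, and a Newton's-identities evaluation on a test polynomial congruent to $1$ modulo $T^k$ to establish primitivity. The only cosmetic difference is that the paper compares $\iota_\alpha(T^{k+1}-c)=1$ with $\iota_\alpha(T^k-c)=\psi(ka_k/c)$ instead of evaluating at $T^{k+1}+aT^k+1$, but the underlying computation (the collapse $p_k=(-1)^{k-1}k\,e_k$ when $e_1=\cdots=e_{k-1}=0$, using $\charfq\nmid k$ and $a_k\neq 0$) is identical.
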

\begin{proof}
We know $\iota_\alpha$ is completely multiplicative, vanishes at multiples of $T$ and $\iota_\alpha(1) = 1$. It remains to show that it only depends on the residue of the input modulo $T^{k+1}$ and that it is primitive.
Newton's identities yield that $p_i(f)$ is a function of the $i$ first next-to-leading coefficients of $f$, where the $j$th ($j \ge 1$) next-to-leading coefficient of $T^n+\sum_{i=0}^{n-1} a_iT^i$ is defined to be $a_{n-j}$ if $j \le n$ and to be $0$ otherwise. Hence, $\alpha = \xi_{\mathbf{a},\psi}(f)$ only depends on the $k$ first next-to-leading coefficients of $f$. Since by definition $\iota(f)$ reverses the coefficients of $f$ and normalizes by $f(0)$, it follows that $\iota_\alpha(f)$ depends only on the last $k+1$ coefficients of $f$, that is, on $f \bmod T^{k+1}$.
Finally, $\iota_\alpha(T^{k+1}-c)=1$ for every $c \in \FF_q^{\times}$ while a short computation using Newton's identities shows $\iota_\alpha(T^k-c)=\psi(ka_k/c)$ is not equal to $1$ for suitable $c$.
\end{proof}
For $k \ge 1$ and an additive character $\psi \colon \FF_q \to \CC$, with a slight abuse of notation, let 
\begin{align}
\xi_{k,\psi}(f) &:=  \psi(p_k(f)),
\end{align}
which coincides with $\xi_{\mathbf{a},\psi}$ for $a_1=\ldots=a_{k-1}=0$, $a_k=1$.
By definition, we have
\[
\chi_{k,\psi}= \iota_{\xi_{k,\psi}} = \iota_{\xi_{k,\psi}\cdot \chi_{0}}. 
\]
\begin{remark}\label{rem:kminusk}
We have $S(n,\xi_{k,\psi})=\sum_{i=0}^{n} S(i,\chi_{k,\psi})$ by \eqref{eq:sscon2}. In particular, $q^{-n}S(n,\chi_{k,\psi}) \ll (1+\log_q n + \sqrt{\log_q k})/n$ implies $q^{-n}S(n,\xi_{k,\psi}) \ll (1+\log_q n + \sqrt{\log_q k})/n$ and vice versa.
\end{remark}
\subsection{Reduction of Theorem \ref{thm:symmetric} to character sum estimates}
Recall that $\mathbf{X} = (X_i)_{i=1}^{n} \in \barfq^n$ are $n$ random variables such that $e_1(\mathbf{X}),\ldots,e_n(\mathbf{X})$ are independent and uniformly distributed on $\FF_q$, and that one can take $X_i$ to be the zeros of a polynomial $f$ chosen uniformly at random from $\Mnq$. 

Let $a_1,\ldots,a_k \in \FF_q$ and denote by $\widehat{\FF_q}$ the group of $q$ additive characters from $\FF_q$ to $\CC^{\times}$, with $\psi_0$ denoting the trivial character.
By Fourier analysis on $\FF_q$, given $x \in \FF_q$,
\begin{multline}
\PP(\sum_{i=1}^{k}a_i p_i(\mathbf{X})=x) = \PP_{f \in \Mnq}(\sum_{i=1}^{k}a_i p_i(f)=x)\\ = q^{-1} \sum_{\psi\in \widehat{\FF_q}} \overline{\psi}(x)\EE_{f \in \Mnq} \psi(\sum_{i=1}^{k} a_i p_i(f))=q^{-1} \sum_{\psi\in \widehat{\FF_q}} q^{-n} \overline{\psi}(x) S(n,\xi_{\mathbf{a},\psi}).
\end{multline}
Since $S(n,\xi_{\mathbf{a},\psi_0})=q^n$, the triangle inequality implies
 \begin{equation}\label{eq:triangle ineq}
 	\sum_{x \in \FF_q} \big| \PP(\sum_{i=1}^{k} a_i p_i(\mathbf{X})=x) - q^{-1} \big|  \le \sum_{\psi_0 \neq \psi \in \widehat{\FF_q}} |q^{-n}S(n,\xi_{\mathbf{a},\psi})|.
 \end{equation}
 By \eqref{eq:triangle ineq}, \eqref{eq:sscon2}, and the triangle inequality,
\begin{equation}\label{eq:triangle ineq2} \sum_{x \in \FF_q} \big| \PP(\sum_{i=1}^{k} a_i p_i(\mathbf{X})=x) - q^{-1} \big|  \le\sum_{j=0}^{n}q^{j-n} \sum_{\psi_0 \neq \psi \in \widehat{\FF_q}} |q^{-j}S(j,\iota_{\xi_{\mathbf{a},\psi}})|.
\end{equation}
A special case of \eqref{eq:triangle ineq2} is 
\begin{equation}\label{eq:triangleineqspecial}
\sum_{x \in \FF_q} \big| \PP(p_k(\mathbf{X})=x) - q^{-1} \big|  \le\sum_{j=0}^{n}q^{j-n} \sum_{\psi_0 \neq \psi \in \widehat{\FF_q}} |q^{-j}S(j,\chi_{k,\psi})|.
\end{equation}
The first part of Theorem \ref{thm:symmetric} is immediate from \eqref{eq:triangleineqspecial} and Theorem \ref{thm:canc}. The third part follows from \eqref{eq:triangleineqspecial} and Proposition \ref{prop:crit}. For the second part of Theorem \ref{thm:symmetric}, we use \eqref{eq:triangle ineq2} and observe $\iota_{\xi_{\mathbf{a},\psi}}$ is a nonprincipal Dirichlet character modulo $T^{k+1}$ by Lemma \ref{lem:invchar}, and the result follows from applying the following lemma.

 \begin{lem}[Bhowmick--L\^{e}]\label{lem:char}
Let $\chi\colon \Mq \to \CC$ be a nonprincipal Dirichlet character modulo $Q$. We have
\[ q^{-n}S(n,\chi) \ll \frac{1+\log_q (1+\deg Q)}{n+1}.\]
\end{lem}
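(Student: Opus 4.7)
The strategy combines the Bhowmick--L\^e decomposition \eqref{eq:bho3} with a sieve upper bound for smooth polynomials.

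First I would dispose of two trivial ranges. If $n \ge \deg Q$, then since $\chi$ is nonprincipal its $L$-function $\sum_{k \ge 0} S(k,\chi)u^k$ is a polynomial in $u$ of degree at most $\deg Q - 1$ (Weil's theorem in the function field setting), so $S(n,\chi) = 0$. On the other hand, if $n+1 \le C \log_q(1+\deg Q)$ for a suitable absolute constant $C$, the claimed bound already exceeds the trivial estimate $q^{-n}|S(n,\chi)| \le 1$, and there is nothing to do.

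In the remaining range $C \log_q(1+\deg Q) \le n+1 \le \deg Q$, I would set $m := \lceil 4\log_q(1+\deg Q)\rceil$, so that $2\log_q \deg Q \le m \le n$, and apply \eqref{eq:bho3} to obtain
\[
S(n,\chi) \;=\; \sum_{\substack{f \in \Mnq \\ P \mid f \implies \deg P \le m}} \chi(f) \;+\; O\brackets{q^{n-m/2}\deg Q}.
\]
With this choice of $m$ the error term is $O(q^n/(1+\deg Q))$, and since $n+1 \le \deg Q$ it is dominated by $q^n \log_q(1+\deg Q)/(n+1)$, as required. For the leading term, bounding $|\chi(f)| \le 1$ trivially reduces the task to estimating $\Psi(n,m)$, the number of $m$-smooth monic polynomials of degree $n$. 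Invoking the self-contained sieve estimate (Lemma \ref{lem:sieve}), which supplies a bound of the shape $\Psi(n,m) \ll q^n m/(n+1)$, and using $m = O(\log_q(1+\deg Q))$, gives exactly the target.

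The main obstacle is the sieve estimate $\Psi(n,m) \ll q^n m/(n+1)$. Heuristically, for a uniformly chosen $f \in \Mnq$ the probability of having no prime factor of degree $d$ is approximately $\exp(-|\mathcal{P}_{d,q}|/q^d) \approx e^{-1/d}$, so the $m$-smooth density should be of order $\prod_{m < d \le n} e^{-1/d} \approx m/n$. Turning this heuristic into a rigorous upper bound---for instance by a Rankin-style truncation of an appropriate Dirichlet series, or by a direct inclusion--exclusion over prime factors of degree $> m$---is the content of Lemma \ref{lem:sieve}; granted this, the desired bound follows from combining the two estimates above.
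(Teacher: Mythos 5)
Your proposal is correct and follows essentially the same route as the paper: reduce to the range $C\log_q(1+\deg Q) \lesssim n < \deg Q$ via the trivial bound and the vanishing of $S(n,\chi)$ for $n \ge \deg Q$, apply the decomposition \eqref{eq:bho3} with $m \asymp \log_q(1+\deg Q)$, and bound the smooth sum by Lemma \ref{lem:sieve}. The only (immaterial) difference is the exact choice of $m$ — the paper takes $m=\lceil 2\log_q(n(1+\deg Q))\rceil$, you take $\lceil 4\log_q(1+\deg Q)\rceil$ — and both choices make the error term $O(q^{n-m/2}\deg Q)$ admissible.
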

Lemma \ref{lem:char} follows from \eqref{eq:bho3} with $m=\lceil 2\log_q(n(1+\deg Q))\rceil$ (see Section \ref{sec:prooflemchar} for details).
\subsection{Reductions of Theorems \ref{thm:matrixsingle}--\ref{thm:crit} to character sum estimates}
We define an arithmetic function $\gfnc \colon \Mq \to \CC$ as follows. If $f \in \Mq$ and $n = \deg f$, then
\begin{equation}
\gfnc(f) := \PP_{g \in\glnq}(\det(I_{n} T - g) = f),
\end{equation}
where $\glnq$ is endowed with the uniform measure. It follows from the work of Reiner \cite{Reiner1961} and Gerstenhaber \cite{Gerstenhaber1961} that $\gfnc$ is multiplicative (in the sense that $\gfnc(fg)=\gfnc(f)\gfnc(g)$ when $\gcd(f,g)=1$) and supported on $\Mqt$. Their works also show that on prime powers it is given by $\gfnc(T^e)=0$ and
\begin{equation}\label{eq:gfncpp} \gfnc(P^e) = q^{-e\deg P} \prod_{i=1}^{e} (1-q^{-i\deg P})^{-1},
\end{equation}
where $P \in \Pq \setminus \{T\}$ and $e \ge 1$
(cf.~\cite[Thm.~3.3]{Gorodetsky2021}). The following identity is a quick consequence of \eqref{eq:gfncpp} (it can also be derived directly from \cite[Thm.~3.4]{Gorodetsky2021}). Recall $|f|=q^{\deg f}$.
\begin{lem}\label{lem:iden}
We have $\gfnc = \alpha_1 * \alpha_2$ where $\alpha_1(f) = |f|^{-1}\cdot \mathbf{1}_{T \nmid f}$ and $\alpha_2(f) = \gfnc(f)/|f|$.
\end{lem}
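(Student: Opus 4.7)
The plan is to exploit multiplicativity to reduce the claim to a local (prime-power) identity, and then verify that local identity by a short algebraic manipulation.

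First I would observe that both $\alpha_1$ and $\alpha_2$ are multiplicative: $\alpha_1$ is multiplicative because $f\mapsto |f|^{-1}$ is completely multiplicative and $\mathbf{1}_{T\nmid f}$ is multiplicative, while $\alpha_2$ is multiplicative because $\gfnc$ is (per Reiner and Gerstenhaber) and dividing by $|f|$ preserves multiplicativity. Hence $\alpha_1 * \alpha_2$ is multiplicative. Since $\gfnc$ is also multiplicative, it suffices to check $\gfnc(P^e) = (\alpha_1 * \alpha_2)(P^e)$ for every prime $P \in \Pq$ and every $e \ge 0$.

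For $P = T$, I would note that $\alpha_1(T^i)=0$ for $i\ge 1$ and $\alpha_2(T^j)=\gfnc(T^j)/|T^j|=0$ for $j\ge 1$, so all nontrivial terms in the convolution $\sum_{i+j=e}\alpha_1(T^i)\alpha_2(T^j)$ vanish for $e\ge 1$, matching $\gfnc(T^e)=0$. For $P \neq T$, write $u=|P|^{-1}$ and note that $\alpha_1(P^i)=u^i$ and $\alpha_2(P^j)=u^j \gfnc(P^j)$ for all $i,j\ge 0$, so
\[
(\alpha_1 * \alpha_2)(P^e) = \sum_{i+j=e} u^i \cdot u^j \gfnc(P^j) = u^e \sum_{j=0}^{e} \gfnc(P^j).
\]
Setting $A_e := \prod_{i=1}^{e}(1-u^i)^{-1}$ (so $A_0=1$), formula \eqref{eq:gfncpp} reads $\gfnc(P^j)=u^j A_j$, and the identity to be verified becomes
\[
A_e = \sum_{j=0}^{e} u^j A_j.
\]

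Finally I would verify this elementary identity by telescoping. From $A_e(1-u^e)=A_{e-1}$ one gets $A_e - A_{e-1}=u^e A_e$, and summing over $1 \le j \le e$ yields $A_e = A_0 + \sum_{j=1}^{e} u^j A_j = \sum_{j=0}^{e} u^j A_j$, as required. The only step with any content is the prime-power identity, and it is purely formal; I anticipate no real obstacle, which aligns with the excerpt's description of the lemma as a quick consequence of \eqref{eq:gfncpp}.
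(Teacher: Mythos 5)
Your proof is correct and follows exactly the route the paper intends (the paper only remarks that the lemma is "a quick consequence of \eqref{eq:gfncpp}"): reduce to prime powers by multiplicativity, handle $P=T$ trivially, and verify the telescoping identity $A_e=\sum_{j=0}^{e}u^jA_j$ for $P\neq T$. No gaps.
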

Let  $a_1,\ldots,a_k \in \FF_q$. By Fourier analysis on $\FF_q$, given $x \in \FF_q$, we have
\[\PP_{g \in \glnq}\big(\sum_{i=1}^{k} a_i\Tr(g^i)=x\big)= q^{-1} \sum_{\psi\in  \widehat{\FF_q}} \EE_{g \in \glnq}\overline{\psi}(x)\psi( \sum_{i=1}^{k}a_i\Tr(g^i)).\]
The trivial character $\psi_0$ contributes $1/q$ to the right-hand side. Recall $p_k(f)$ ($f \in \Mq$) is defined in \eqref{eq:pk}. Since $p_i(\deg(I_nT-g)) = \Tr(g^i)$,
\[\PP_{g \in \glnq}\big(\sum_{i=1}^{k} a_i\Tr(g^i)=x\big) -q^{-1}= q^{-1} \sum_{\psi_0 \neq \psi \in \widehat{\FF_q}}\overline{\psi}(x) \sum_{f \in \Mnq} \gfnc(f) \xi_{\mathbf{a},\psi}(f).\]
Summing over $x$ and using the triangle inequality gives 
\begin{equation}\label{eq:foucon}
\sum_{x \in \FF_q}\big|\PP_{g \in \glnq}\big(\sum_{i=1}^{k} a_i\Tr(g^i)=x\big) - q^{-1}\big| \le \sum_{\psi_0 \neq \psi \in \widehat{\FF_q}} |S(n, \gfnc \cdot \xi_{\mathbf{a},\psi})|.
\end{equation}
In \cite[Thm.~3.6]{Gorodetsky2021}, the first author and Rodgers showed that
\begin{equation}\label{eq:gororodg}
|S(n, \gfnc \cdot \xi_{\mathbf{a},\psi})| \le q^{-\frac{n^2}{2k}+O_q(n)}.
\end{equation}
For fixed $k$ this is essentially optimal up to constants, because $|S(n, \gfnc \cdot \xi_{\mathbf{a},\psi})|$ cannot decay faster than exponentially in $n^2$ due to $|\glnq| = q^{\Theta(n^2)}$. 
In this paper, we focus on the range of cancellation rather than the rate of cancellation, and in this aspect, we can do better. Observe that from Lemma \ref{lem:iden},
\[ S(n,\gfnc \cdot \xi_{\mathbf{a},\psi}) = q^{-n} \sum_{i+j=n} \sum_{f \in \mathcal{M}_{i,q}, \, T \nmid f} \xi_{\mathbf{a},\psi}(f) \sum_{g \in \mathcal{M}_{j,q}} \gfnc(g)\xi_{\mathbf{a},\psi}(g).\]
Since $\gfnc$ is a probability measure on $\mathcal{M}_{j,q}$, the triangle inequality yields 
\begin{equation}\label{eq:triang} |S(n,\gfnc \cdot \xi_{\mathbf{a},\psi})| \le  q^{-n} \sum_{i=0}^{n} |S(i,\xi_{\mathbf{a},\psi} \cdot \chi_{0}) |.
\end{equation}
From \eqref{eq:triang}, \eqref{eq:foucon}, and \eqref{eq:sscon},
\begin{equation}\label{eq:done}
\sum_{x \in \FF_q}\big|\PP_{g \in \glnq}\big(\sum_{i=1}^{k} a_i\Tr(g^i)=x\big) - q^{-1}\big| \le q^{-n} \sum_{\psi_0 \neq \psi \in \widehat{\FF_q}} \sum_{i=0}^{n}|S(i, \iota_{\xi_{\mathbf{a},\psi}})|.
\end{equation}
Similarly to the symmetric function case, we see that Theorem \ref{thm:matrix} follows from \eqref{eq:done} and Lemma \ref{lem:char}. 

In the special case $a_1=\ldots=a_{k-1}$ and $a_k=1$, we have
\begin{equation}\label{eq:done2}
\sum_{x \in \FF_q}\big|\PP_{g \in \glnq}\big(\Tr(g^k)=x\big) - q^{-1}\big| \le q^{-n} \sum_{\psi_0 \neq \psi \in \widehat{\FF_q}} \sum_{i=0}^{n}|S(i, \chi_{k,\psi})|.
\end{equation}
Theorem \ref{thm:matrixsingle} follows from \eqref{eq:done2} and Theorem \ref{thm:canc}. Theorem \ref{thm:crit} follows from \eqref{eq:done2} and Proposition \ref{prop:crit}. 

\section{Proof of Theorem \ref{thm:canc}}\label{sec:canc}
The von Mangoldt function $\Lambda \colon \Mq \to \CC$ is defined as
\begin{equation}\label{def:lambda}
\Lambda(f) = \begin{cases} \deg P & \text{if }f = P^k, P \in \Pq,\, k \ge 1, \\ 0 & \text{otherwise.}\end{cases}
\end{equation}
Gauss' identity \cite[Thm.~2.2]{Rosen2002} states that
\begin{equation}\label{eq:gauss}
\sum_{f \in \Mnq} \Lambda(f) = \sum_{d \mid n} |\mathcal{P}_{d,q}| d =q^n,
\end{equation}
and is known to imply the following.
\begin{lem}\cite[Lem.~4]{Pollack}\label{lem:pnq}
We have $q^n/n - 2q^{n/2}/n \le |\Pnq| \le q^n/n$ for $n \ge 1$. 
\end{lem}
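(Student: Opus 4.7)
The plan is to deduce both bounds directly from Gauss' identity \eqref{eq:gauss}. For the upper bound, I would simply observe that every term in the sum $\sum_{d \mid n} |\mathcal{P}_{d,q}| d = q^n$ is nonnegative, so isolating the $d=n$ contribution gives $n |\Pnq| \le q^n$, which rearranges to $|\Pnq| \le q^n/n$.

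For the lower bound, I would rewrite Gauss' identity as
\[ n |\Pnq| = q^n - \sum_{\substack{d \mid n\\ d < n}} d\,|\mathcal{P}_{d,q}|. \]
To control the error sum, I would use the trivial estimate $d\,|\mathcal{P}_{d,q}| \le q^d$, which is immediate from Gauss' identity applied at degree $d$ (alternatively, from the fact that there are only $q^d$ monic polynomials of degree $d$ in $\FF_q[T]$). Since every proper divisor of $n$ is at most $n/2$, a geometric series bound gives
\[ \sum_{\substack{d \mid n\\ d<n}} d\,|\mathcal{P}_{d,q}| \le \sum_{d=1}^{\lfloor n/2\rfloor} q^d \le \frac{q^{\lfloor n/2\rfloor+1}}{q-1} \le 2 q^{n/2}, \]
where the last inequality uses $q/(q-1) \le 2$ for $q \ge 2$. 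Substituting this bound into the identity yields $n|\Pnq| \ge q^n - 2q^{n/2}$, which is exactly the claimed lower bound after dividing by $n$.

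The argument is elementary manipulation of Gauss' identity, so I do not anticipate any serious obstacle. The only mild point of care is producing the constant $2$ cleanly: one must use that $q \ge 2$ to bound the geometric series, and one must use that all proper divisors of $n$ are bounded by $n/2$ so that the largest surviving term is $q^{n/2}$.
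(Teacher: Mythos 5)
Your proof is correct and is exactly the standard argument (the one in the cited reference of Pollack and in Rosen's book): isolate the $d=n$ term of Gauss' identity for the upper bound, and bound the proper-divisor contribution by a geometric series capped at $q^{n/2}$ for the lower bound. The constant $2$ comes out cleanly from $q/(q-1)\le 2$ as you note, and the empty-sum case $n=1$ causes no trouble, so there is nothing to add.
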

Given a Dirichlet character $\chi$ its $L$-function is defined as
\begin{equation}\label{eq:euler}
    L(u,\chi) = \sum_{f \in \Mq} \chi(f) u^{\deg f} = \prod_{P \in \Pq}(1-\chi(P)u^{\deg P})^{-1},
\end{equation}
which converges absolutely for $|u|<1/q$. If $\chi$ is a nonprincipal character modulo $Q$, then $L(u,\chi)$ is a polynomial of degree at most $\deg Q - 1$ as follows from the orthogonality relations for $\chi$. We set
\[ d(\chi):=\deg L(u,\chi) < \deg Q.\]
\begin{thm}[Weil's RH \cite{Weil}]\label{thm:weil} Let $\chi$ be a nonprincipal Dirichlet character. Factoring $L(u,\chi)$ as
\begin{equation}\label{eq:roots factorization}
L(u,\chi) = \prod_{i=1}^{d(\chi)}(1-\gamma_i u),
\end{equation}
we have $|\gamma_i| \in \{1, \sqrt{q}\}$ for all $1 \le i \le d(\chi)$.
\end{thm}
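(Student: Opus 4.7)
The plan is to first reduce $\chi$ to a primitive character and then invoke Weil's Riemann hypothesis for curves. Every Dirichlet character $\chi \pmod Q$ is induced from a unique primitive character $\chi^* \pmod{Q^*}$ with $Q^* \mid Q$, and the $L$-functions are related by
\[ L(u,\chi) = L(u,\chi^*) \prod_{\substack{P \in \Pq \\ P \mid Q,\, P \nmid Q^*}} \bigl(1-\chi^*(P) u^{\deg P}\bigr). \]
For each $P$ with $\chi^*(P) \neq 0$, the bracket $(1-\chi^*(P) u^{\deg P})$ factors into $\deg P$ linear factors whose inverse roots have absolute value $|\chi^*(P)|^{1/\deg P} = 1$, while brackets with $\chi^*(P)=0$ contribute no inverse roots. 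These account for all the $\gamma_i$ with $|\gamma_i|=1$, so it remains to show that every inverse root of $L(u,\chi^*)$ for primitive nontrivial $\chi^*$ has absolute value $\sqrt q$.

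For this, the standard geometric route is to build a smooth projective curve $C$ over $\FF_q$, namely the (compactified) ray class cover of $\mathbb{P}^1$ with conductor dividing $Q^* \cdot \infty$, whose zeta function factors as $Z(u,C) = \prod_\psi L(u,\psi)$ over characters $\psi$ of the abelian Galois group of $C \to \mathbb{P}^1$. Weil's Riemann hypothesis for curves over finite fields then asserts that the numerator of $Z(u,C)$ has all inverse roots on the circle $|u|=1/\sqrt q$, which transfers to each factor including $L(u,\chi^*)$. An alternative, more elementary path would be to establish square-root cancellation $|S(n,\chi^*)| \ll q^{n/2}$ directly via the Stepanov--Bombieri auxiliary polynomial method, and then recover the root locations from the identity
\[ \sum_{i=1}^{d(\chi^*)} \gamma_i^n = -\sum_{f \in \Mnq} \Lambda(f)\,\chi^*(f), \]
obtained by extracting coefficients from the logarithmic derivative of \eqref{eq:euler}, together with the functional equation pairing $\gamma$ with $q/\gamma$.

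The main obstacle is of course the square-root cancellation for primitive characters, which is the whole substance of the Riemann hypothesis for curves and admits no short self-contained proof; this is precisely why the statement is quoted here as a black box citing Weil. The reduction to primitive characters and the passage from character sum bounds to pointwise statements about $|\gamma_i|$ (via the logarithmic derivative and symmetric function identities relating power sums of the $\gamma_i$ to prime-indexed sums) are both routine given the Weil input.
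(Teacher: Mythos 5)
The paper does not prove this theorem at all: it is quoted as a black box with a citation to Weil, exactly as you observe at the end of your proposal. So there is no ``paper proof'' to compare against, and your sketch of how one \emph{would} prove it (reduction to primitive characters, then the Riemann hypothesis for the associated ray class cover of $\mathbb{P}^1$, or alternatively Stepanov--Bombieri plus the logarithmic-derivative identity) is the standard route and is correctly attributed.

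One genuine inaccuracy in your reduction, though: you claim that after stripping the imprimitivity Euler factors, ``it remains to show that every inverse root of $L(u,\chi^*)$ for primitive nontrivial $\chi^*$ has absolute value $\sqrt q$.'' That is false for \emph{even} primitive characters, i.e.\ those trivial on the constants $\FF_q^{\times}\subseteq(\FF_q[T]/Q^*)^{\times}$. For such $\chi^*$ the Dirichlet $L$-function as defined in \eqref{eq:euler} (a sum over monic polynomials, with no Euler factor at the place at infinity) has a trivial zero at $u=1$, so that $L(u,\chi^*)=(1-u)L^{*}(u,\chi^*)$ with only $L^{*}$ having all inverse roots of modulus $\sqrt q$. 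This is precisely why the theorem is stated with $|\gamma_i|\in\{1,\sqrt q\}$ rather than $|\gamma_i|=\sqrt q$: the unit-modulus inverse roots come both from imprimitivity \emph{and} from the place at infinity for even characters, and only the completed $L$-function of the corresponding id\`ele class character is pure of weight one. The fix is routine (account for the infinite place when passing from the zeta function of the cover to the Dirichlet $L$-function), but as written your accounting of the $|\gamma_i|=1$ roots is incomplete.
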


\begin{lem}\label{lem:sum von}
Let $\chi$ be a nonprincipal Dirichlet character, then
\begin{align}\label{eq:weil}
\bigg|\sum_{f \in \Mnq} \chi(f) \Lambda(f)\bigg| \le q^{\frac{n}{2}} d(\chi).
\end{align}
\end{lem}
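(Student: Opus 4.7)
The plan is to prove Lemma \ref{lem:sum von} by taking the logarithmic derivative of $L(u,\chi)$ in two different ways, matching coefficients, and then invoking Weil's bound on the inverse roots.

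First I would take $\log$ of the Euler product in \eqref{eq:euler}, differentiate, and multiply by $u$ to obtain
\[
\frac{uL'(u,\chi)}{L(u,\chi)} \;=\; \sum_{P \in \Pq} \sum_{k\ge 1} \chi(P)^k (\deg P)\, u^{k\deg P}.
\]
Collecting terms by $n=k\deg P$ and recognizing from \eqref{def:lambda} that the inner sum equals $\sum_{f \in \Mnq}\Lambda(f)\chi(f)$ yields the identity
\[
\frac{uL'(u,\chi)}{L(u,\chi)} \;=\; \sum_{n\ge 1}\bigg(\sum_{f \in \Mnq}\Lambda(f)\chi(f)\bigg)u^n,
\]
valid as a formal power series (and convergent for $|u|<1/q$).

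Next I would compute the same logarithmic derivative starting from the factorization \eqref{eq:roots factorization} provided by Theorem \ref{thm:weil}, using the geometric series expansion
\[
\frac{uL'(u,\chi)}{L(u,\chi)} \;=\; \sum_{i=1}^{d(\chi)} \frac{-\gamma_i u}{1-\gamma_i u} \;=\; -\sum_{n\ge 1}\bigg(\sum_{i=1}^{d(\chi)} \gamma_i^n\bigg) u^n.
\]
Comparing coefficients of $u^n$ in the two expressions gives the explicit formula
\[
\sum_{f\in \Mnq}\Lambda(f)\chi(f) \;=\; -\sum_{i=1}^{d(\chi)} \gamma_i^n.
\]

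Finally, I would apply the triangle inequality together with the Weil bound $|\gamma_i|\in\{1,\sqrt{q}\}$ from Theorem \ref{thm:weil}, giving $|\gamma_i|^n \le q^{n/2}$ and hence $\big|\sum_{f\in \Mnq}\Lambda(f)\chi(f)\big| \le d(\chi)\, q^{n/2}$, which is exactly \eqref{eq:weil}. There is no real obstacle here; the lemma is essentially a textbook derivation from Weil's theorem, and the only care needed is the routine interchange of sums in the formal-power-series manipulation, which is justified either by absolute convergence in $|u|<1/q$ or by working formally.
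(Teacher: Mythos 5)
Your proposal is correct and matches the paper's proof exactly: both take the logarithmic derivative of the Euler product \eqref{eq:euler} and of the factorization \eqref{eq:roots factorization}, compare coefficients to get $\sum_{f\in\Mnq}\chi(f)\Lambda(f)=-\sum_{i=1}^{d(\chi)}\gamma_i^n$, and conclude via Theorem \ref{thm:weil} and the triangle inequality. You have simply written out the intermediate steps that the paper leaves implicit.
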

\begin{proof}
We take the logarithmic derivative of the Euler product \eqref{eq:euler} and \eqref{eq:roots factorization} and compare coefficients to obtain 
\begin{equation}
\sum_{f \in \Mnq} \chi(f) \Lambda(f) = - \sum_{i=1}^{d(\chi)} \gamma_i^n
\end{equation}
for all $n \ge 1$. Then, Theorem \ref{thm:weil} implies the lemma via the triangle inequality.
\end{proof}

Let $\alpha\colon \Mq \to \CC$ be a completely multiplicative function. Given any subset $T$ of the positive integers $\NN$, we define
\[
F_{T}(u,\alpha) := \sum_{\substack{f \in \Mq \\ P \mid f \implies \deg P \in T }} \alpha(f)u^{\deg f}=
\prod_{P \in \Pq, \, \deg P \in T} (1-\alpha(P)u^{\deg P})^{-1}.
\]
Let us write $[u^m]F$ for the $m$th coefficient of a power series $F$. Our starting point is the identity
\begin{equation}\label{eq:iden}
\begin{split}
	\sum_{f \in \Mnq} \alpha(f) &= [u^n]F_{\NN}(u,\alpha)= [u^n]F_{T^c}(u,\alpha)F_{T}(u,\alpha) \\
	&= [u^n] F_{T^c}(u,\alpha) + [u^n] F_{T^c}(u,\alpha)(F_{T}(u,\alpha) -1)\\
 &=q^{-n}\sum_{\substack{f \in \Mnq\\ P \mid f \implies \deg P \not\in T}} \alpha(f)+q^{-n}\sum_{\substack{f \in \Mnq\\ \exists P \mid f \text{ such that }\deg P \in T}} \alpha(f),
 \end{split}
 \end{equation}
where $T^c = \NN \setminus T$ is the complement of $T$.

\subsection{Montgomery and Vaughan's estimate}
To bound the second term in \eqref{eq:iden}, we prove the following lemma generalizing an estimate of Montgomery and Vaughan \cite[Lem.~2]{Montgomery1977} in the polynomial setting.

\begin{lem}\label{lem:MV}
Let $\alpha\colon \Mq \to \CC$ be a $1$-bounded completely multiplicative function, meaning $|\alpha(f)|\le 1$ for all $f \in \Mq$. Let $n \ge 1$ and $S \subseteq \{1,\ldots,n\}$ be a set of positive integers, and $S^c = \{1,\ldots,n\} \setminus S$ be its complement. Let $s_0 = \min_{s \in S} s$. Then,
\[
 q^{-n}\sum_{\substack{f \in \Mnq\\ \exists P \mid f \text{ such that }\deg P \in S}} \alpha(f)\ll \exp(A_1)(\exp(A_2)-1),
\]
where
\[ A_1 = \sum_{d \in S^c} \frac{1}{d}, \qquad A_2 = \sum_{d \in S} \frac{q^{-d}}{d} \bigg| \sum_{f \in \mathcal{M}_{d,q}} \alpha(f)\Lambda(f)\bigg| + O(q^{-\frac{s_0}{2}}/s_0).\]
\end{lem}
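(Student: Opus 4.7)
The plan is to interpret the sum as the $u^n$ coefficient of a generating function and estimate it via Cauchy's formula on the circle $|u|=1/q$. By identity \eqref{eq:iden} applied with $T=S$, the sum in question equals $[u^n]\,F_{S^c}(u,\alpha)(F_S(u,\alpha)-1)$. Since $S\subseteq\{1,\ldots,n\}$, only primes of degree $\le n$ affect the $u^n$ coefficient, so both $F_{S^c}$ and $F_S$ may be regarded as finite rational functions, with no poles on $|u|=1/q$ because $|1-\alpha(P)u^{\deg P}|\ge 1-q^{-\deg P}>0$ there. Cauchy's formula then gives
\[ q^{-n}\Big|\!\sum_{\substack{f\in\Mnq\\ \exists P\mid f,\,\deg P\in S}}\!\alpha(f)\Big|\;\le\;\max_{|u|=1/q}|F_{S^c}(u,\alpha)|\cdot|F_S(u,\alpha)-1|, \]
reducing the lemma to bounding each factor separately on this circle.

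For the first factor, $1$-boundedness of $\alpha$ yields $|F_{S^c}(u,\alpha)|\le\prod_{d\in S^c}(1-q^{-d})^{-|\mathcal{P}_{d,q}|}$ on $|u|=1/q$. Taking logarithms, expanding $-\log(1-q^{-d})=q^{-d}+O(q^{-2d})$, and using $|\mathcal{P}_{d,q}|\le q^d/d$ from Lemma \ref{lem:pnq} yields $\sum_{d\in S^c}(1/d+O(q^{-d}/d))=A_1+O(1)$, whence $|F_{S^c}(u,\alpha)|\ll\exp(A_1)$.

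The bound on the second factor is the heart of the proof: here majorization alone is too lossy, and the cancellation in $\alpha$ must be preserved. Writing $F_S(u,\alpha)=\exp(G_S(u,\alpha))$ with $G_S(u,\alpha)=\sum_{P:\deg P\in S}\sum_{k\ge 1}\alpha(P)^k u^{k\deg P}/k$, the elementary estimate $|e^z-1|\le e^{|z|}-1$ shifts the task to proving $|G_S(u,\alpha)|\le A_2$ on $|u|=1/q$. Decompose $G_S$ into its $k=1$ and $k\ge 2$ parts. The tail is handled crudely: $|\mathrm{tail}|\le\sum_{d\in S, k\ge 2}|\mathcal{P}_{d,q}|q^{-kd}/k\ll q^{-s_0}/s_0$, which is absorbed into the $O(q^{-s_0/2}/s_0)$ budget in $A_2$. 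For the $k=1$ part, the triangle inequality gives $|\sum_{P:\deg P\in S}\alpha(P)u^{\deg P}|\le\sum_{d\in S}q^{-d}|\sum_{P:\deg P=d}\alpha(P)|$. Now the identity
\[ a_d:=\sum_{f\in\mathcal{M}_{d,q}}\alpha(f)\Lambda(f)=d\sum_{P:\deg P=d}\alpha(P)+\sum_{\substack{j\ge 2\\ j\mid d}}\frac{d}{j}\sum_{P:\deg P=d/j}\alpha(P)^j \]
combined with $|\mathcal{P}_{d/j,q}|\le q^{d/j}/(d/j)$ produces $|\sum_{P:\deg P=d}\alpha(P)|\le|a_d|/d+O(q^{d/2}/d)$. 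Summing against $q^{-d}$ over $d\in S$ yields $\sum_{d\in S}q^{-d}|a_d|/d+O(q^{-s_0/2}/s_0)\le A_2$, as required.

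The main obstacle is this last estimate: all the work goes into converting a sum over primes of degree $d$ into the cleaner von Mangoldt sum $a_d$ while losing only $O(q^{d/2}/d)$, which is exactly what the $O(q^{-s_0/2}/s_0)$ error term in the definition of $A_2$ accommodates. Combining the two bounds delivers $\exp(A_1)(\exp(A_2)-1)$, completing the lemma.
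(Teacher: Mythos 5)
Your proof is correct and follows essentially the same route as the paper: the same decomposition via \eqref{eq:iden}, the same conversion of the degree-$d$ prime sum into the von Mangoldt sum $\sum_{f\in\mathcal{M}_{d,q}}\alpha(f)\Lambda(f)$ at a cost of $O(q^{d/2}/d)$, and the same evaluation at radius $1/q$. The only (cosmetic) difference is that you extract the $u^n$ coefficient via Cauchy's formula on $|u|=1/q$ together with $|e^z-1|\le e^{|z|}-1$, whereas the paper majorizes the coefficients of $F_{S^c}(u,\alpha)(F_S(u,\alpha)-1)$ term by term by those of a nonnegative series and evaluates that majorant at $u=1/q$; both devices give the identical bound $\exp(A_1)(\exp(A_2)-1)$.
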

\begin{remark}\label{rem:bho}
In \cite{Bhowmick2015} Lemma \ref{lem:MV} is proved in the special case $S =\{m+1,m+2,m+3,\ldots,n\}$ and $\alpha$ being a Dirichlet character.  For this choice, one immediately recovers \eqref{eq:bho3} from \eqref{eq:weil}.
\end{remark}
\begin{proof}
Using identity \eqref{eq:iden} and noticing that we can restrict the set of degrees to $\{1,\ldots,n\}$, we have
\[
X := q^{-n}\sum_{\substack{f \in \Mnq\\ \exists P \mid f \text{ such that }\deg P \in S}} \alpha(f) = [u^n] F_{S^c}(u,\alpha)(F_{S}(u,\alpha) -1).
\]
Because $\alpha$ is $1$-bounded, the coefficients in the series of $F_{S^c}(u,\alpha)$ are bounded in absolute value by the respective coefficients of 
\[ Z_{S^c}(u):=\sum_{\substack{f \in \Mq\\ P \mid f \implies \deg P \in S^c}} u^{\deg f} =\prod_{P \in \Pq,\, \deg P \in S^c} (1-u^{\deg P})^{-1}.\]
Further, we may write $F_S(u,\alpha)$ as
\begin{align}
F_{S}(u,\alpha) &= \prod_{P \in \Pq, \, \deg P \in S} (1-\alpha(P)u^{\deg P})^{-1} = \exp \left( \sum_{d \in S}\sum_{i \ge 1} i^{-1} u^{di}\sum_{P \in \mathcal{P}_{d,q}} \alpha(P)^i \right)\\
&=\exp \left( \sum_{d \in S} u^d \sum_{P \in \mathcal{P}_{d,q}}\alpha(P) + \sum_{d \in S} \sum_{i \ge 2}i^{-1}u^{di} \sum_{P \in \mathcal{P}_{d,q}} \alpha(P)^i\right)\\
&=\exp \left( \sum_{d \in S} \frac{u^d}{d}\sum_{f \in \mathcal{M}_{d,q}}\alpha(f)\Lambda(f) +\sum_{d \in S}\sum_{i \ge 2}\frac{u^{di}}{i}  \sum_{P \in \mathcal{P}_{d,q}} \alpha(P)^i-\sum_{d \in S}\frac{u^d}{d} \sum_{\substack{e \mid d\\ e \neq d}} e\sum_{P \in \mathcal{P}_{e,q}} \alpha(P)^{d/e} \right).
\end{align}
It follows that $[u^j](F_{S}(u,\alpha)-1)$ for $0\le j \le n$ are bounded in absolute value by the coefficients of $u^j$ in 
\begin{equation}\label{eq:Zsalphadef}
Z_{S,\alpha}(u) := \exp \left( \sum_{d \in S}\frac{u^d}{d} \bigg|\sum_{f \in \mathcal{M}_{d,q}} \alpha(f)\Lambda(f)\bigg|  +  \sum_{d \in S}\sum_{i \ge 2} \frac{u^{di}}{i} | \mathcal{P}_{d,q}| + \sum_{d \in S} \frac{u^d}{d} \sum_{\substack{e \mid d\\ e \neq d}} e |\mathcal{P}_{e,q}|\right)-1.
\end{equation}
Putting this together, we have
\[ 
|X|=\big|[u^n] F_{S^c}(u,\alpha)(F_{S}(u,\alpha) -1)\big| \le [u^n] Z_{S^c}(u) (Z_{S,\alpha}(u)-1).
\]
We make a general observation. If the coefficients of a power series $F(u)=\sum_{i \ge 0} f_i u^i$ are bounded in absolute value by the respective coefficients of $G(u)=\sum_{i \ge 0} g_i u^i$, then $|f_n| \le G(R)R^{-n}$ for every $0<R<C$ such that $G(R)$ converges. Indeed,
\[ G(R)R^{-n} = \sum_{i=0}^{\infty} g_i R^{i-n} \ge g_n R^{n-n} = g_n \ge |f_n|.\]
Applying this observation with $R=1/q$, we get
\begin{equation}\label{eq:nonneg}
|X|\le q^n Z_{S^c}(1/q) (Z_{S,\alpha}(1/q)-1).
\end{equation}
We estimate $Z_{S^c}(1/q)$ using Lemma \ref{lem:pnq} as
\begin{equation}\label{eq:zsctriv}
Z_{S^c}(1/q) = \exp\brackets{\sum_{\deg P\in S^c} (q^{-\deg P}+O(q^{-2\deg P}))} =\exp\brackets{ \sum_{d \in S^c}d^{-1} + O(1)}.
\end{equation}
Similarly, 
\begin{equation}\label{eq:B}
 \sum_{d \in S}\sum_{i \ge 2} \frac{q^{-di}}{i} | \mathcal{P}_{d,q}| + \sum_{d \in S} \frac{q^{-d}}{d} \sum_{\substack{e \mid d\\ e \neq d}} e |\mathcal{P}_{e,q}|\le \sum_{d \in S} \sum_{i \ge 2} \frac{q^{-di}}{i} \frac{q^d}{d} + 2\sum_{d \in S} \frac{q^{-\frac{d}{2}}}{d}\ll \frac{q^{-s_0/ 2}}{s_0},
\end{equation}
where $s_0 = \min_{s\in S} s$. The required estimate then follows from \eqref{eq:nonneg}, \eqref{eq:zsctriv}, \eqref{eq:Zsalphadef}, and \eqref{eq:B}.
\end{proof}
We now move on to estimating the first term in \eqref{eq:iden}.

\subsection{Sieve estimate}

\begin{lem}[Sieve bound]\label{lem:sieve}
	Let $S \subseteq \{1,\ldots,n\}$ be a set of positive integers, and $S^c = \{1,\ldots,n\}\setminus S$ be its complement. Then
	\[ \sum_{\substack{f \in \Mnq \\ P \mid f \implies \deg P \not\in S}} 1 \ll \frac{q^{n}}{n} \exp\brackets{\sum_{d \in S^c} d^{-1}} \asymp q^{n} \exp\brackets{- \sum_{d \in S} d^{-1}}.\]
\end{lem}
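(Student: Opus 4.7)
The plan is to interpret the sum probabilistically as $q^n \PP_{f \in \Mnq}(\omega_S(f) = 0)$, where $\omega_S(f) := \#\{P \in \Pq : \deg P \in S,\, P \mid f\}$ counts prime divisors of $f$ of degree in $S$, with $f$ uniform in $\Mnq$. Via the identity $\sum_{d=1}^n d^{-1} = \log n + O(1)$, the two stated forms of the bound are equivalent, so my target is $\PP(\omega_S = 0) \ll \exp(-\sum_{d \in S} d^{-1})$.

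The strategy is a Brun-type sieve. The essential input from the function field setting is the following independence: by the Chinese Remainder Theorem, together with the elementary count $|\{f \in \Mnq : Q \mid f\}| = q^{n-\deg Q}$ for $\deg Q \le n$, the divisibility events $\{P_i \mid f\}$ for distinct irreducibles $P_1, \ldots, P_j$ with $\sum_i \deg P_i \le n$ are mutually independent, each of probability $q^{-\deg P_i}$. Setting $\mu := \sum_{d \in S} |\mathcal{P}_{d,q}| q^{-d} \asymp \sum_{d \in S} d^{-1}$ by Lemma~\ref{lem:pnq}, the even-truncated Bonferroni inequality $\mathbf{1}_{\omega_S = 0} \le \sum_{j=0}^{2k}(-1)^j \binom{\omega_S}{j}$ yields upon averaging
\[
 \PP(\omega_S = 0) \le \sum_{j=0}^{2k}(-1)^j A_j, \qquad A_j := \EE\binom{\omega_S(f)}{j} = \sum_{\substack{T \subseteq \mathcal{P}_S,\, |T|=j \\ \sum_{P \in T}\deg P \le n}} q^{-\sum_{P \in T}\deg P}.
\]

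The corresponding \emph{unconstrained} quantities $\tilde A_j := e_j((q^{-\deg P})_{P \in \mathcal{P}_S})$ satisfy $\sum_{j \ge 0}(-1)^j \tilde A_j = H_S(1/q)$ where $H_S(u) := \prod_{d \in S}(1-u^d)^{|\mathcal{P}_{d,q}|}$, and a routine computation using Lemma~\ref{lem:pnq} gives $H_S(1/q) \asymp \exp(-\mu)$. The Maclaurin inequality provides $A_j \le \tilde A_j \le \mu^j/j!$. Choosing the truncation $2k$ to be of order $C\mu$ for a sufficiently large absolute constant $C$, Stirling's approximation yields $\sum_{j > 2k}\tilde A_j \ll \exp(-\mu)$, and a careful comparison of the constrained partial sum with the unconstrained one then gives $\PP(\omega_S = 0) \ll H_S(1/q)$, which is the asserted bound.

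The main obstacle is handling the discrepancy between the constrained $A_j$ and the unconstrained $\tilde A_j$ within the Bonferroni truncation — namely, controlling $\sum_{T: |T| \le 2k,\, \sum \deg > n}(-1)^{|T|}q^{-\sum \deg}$. The naive bound by absolute values is insufficient; the resolution is to exploit the signed cancellation in this alternating sum, either by expressing it as the difference between two validly Bonferroni-bracketed partial sums or via a direct Rankin-type estimate on the ``oversized'' tuples coupled with the observation that such tuples represent an $O(e^{-\mu})$ fraction of the relevant mass.
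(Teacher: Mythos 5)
Your setup is sound as far as it goes: the CRT independence of the divisibility events, the identity $A_j=\EE\binom{\omega_S}{j}$, the Maclaurin/Stirling control of the tail $\sum_{j>2k}\tilde A_j$ for $2k\asymp\mu$, and $H_S(1/q)\asymp e^{-\mu}\asymp\exp(-\sum_{d\in S}d^{-1})$ are all fine. But the proof has a genuine gap at exactly the point you flag yourself: the discrepancy $\sum_{j\le 2k}(-1)^j(\tilde A_j-A_j)$ coming from tuples with $\sum_{P\in T}\deg P>n$ is never actually controlled, and neither of your two proposed resolutions is a proof. This is not a technicality; it is precisely why the pure Brun/Bonferroni sieve fails when the sifting primes are as large as the sample. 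For example, if $S\supseteq(n/2,n]$ the $j=2$ oversized tuples alone carry unsigned mass $\asymp(\log 2)^2/2$, and for $S=(n/100,n]$ this constant ($\gtrsim 0.2$) already exceeds the target $e^{-\mu}\approx 1/100$; so, as you observe, absolute values cannot work --- but the claimed signed cancellation is nowhere exhibited. Your first fix fails because the quantities $\tilde A_j-A_j$ are not the binomial moments of any nonnegative measure (they vanish for $j=0,1$ yet can be positive for $j=2$), so Bonferroni bracketing gives no one-sided control of their alternating sum; your second fix fails because a Rankin shift $q^{\theta(\sum\deg P-n)}$ with $\lambda=q^{\theta n}$ only yields a bound of the shape $\lambda^{-1}e^{\mu\lambda}$, which is minimized near $e^{\mu}$ and is never $O(e^{-\mu})$. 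A correct sieve-theoretic route would be Selberg's sieve (this is exactly what the paper's remark after the lemma points to in the integer setting), or else one must first discard the degrees $d>n/(2k)$ from $S$ so that the truncated inclusion-exclusion has no incomplete terms --- but that costs a factor $\asymp k\asymp\log n$ and yields a bound weaker than the stated lemma by powers of $\log n$.

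For comparison, the paper's proof avoids inclusion-exclusion entirely. Writing $F(u)=\prod_{\deg P\in S^c}(1-u^{\deg P})^{-1}=\sum_i\widetilde A_iu^i$, logarithmic differentiation gives the recursion $nA_n=\sum_{i=1}^{n}A_{n-i}B_i$ with $B_i=\sum_{d\in S^c,\,d\mid i}d|\mathcal{P}_{d,q}|$; the prime polynomial theorem gives $B_i\le\mathbf{1}_{i\in S^c}q^i+2q^{i/2}$, and inserting the trivial bound $A_{n-i}\le q^{n-i}$ yields $A_n\le n^{-1}q^n(F(1/q)+5)$ with $\log F(1/q)\le\sum_{d\in S^c}d^{-1}+1$. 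You should either adopt that argument or replace the Bonferroni step by a genuine upper-bound sieve with nonnegative weights.
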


\begin{remark} In the integer setting, the estimate $\sum_{n\le x: \, (n,k)=1} 1 \ll x \prod_{p\mid k}(1-1/p)$ for any positive integer $k$ whose prime factors do not exceed $x$ is a classical consequence of Selberg's sieve (see \cite{Hall} for a discussion of this and an alternative proof). A permutation analogue of Lemma \ref{lem:sieve} was established (in greater generality) by Ford \cite[Thm.~1.5]{Ford}. The proof we give is self-contained and is in the spirit of \cite{Hall,Halberstam,Ford}.
\end{remark}

\begin{proof}
Let
\[ 
A_i := \sum_{\substack{f \in \mathcal{M}_{i,q} \\ P \mid f \implies \deg P \not\in S}}1,
\]
and define
\begin{equation}\label{eq:deff} 
F(u) :=  \prod_{\deg P \in S^c} (1-u^{\deg P})^{-1} = \exp\left( \sum_{\deg P \in S^c} \sum_{k \ge 1} \frac{u^{k\deg P }}{k}\right)=:\sum_{i=0}^{\infty} \widetilde{A}_i u^i.
\end{equation}
The coefficients satisfy $\widetilde{A}_i = A_i$ for $0 \le i \le n$.
Differentiating \eqref{eq:deff} formally, we see that
\begin{equation}\label{eq:diff}
\sum_{i \ge 1} i \widetilde{A}_i u^{i} = F(u)G(u),
\end{equation}
where
\[ 
G(u) = \sum_{i \ge 1} B_i u^i, \qquad  B_i := \sum_{\substack{k \ge 1,\,\deg P \in S^c \\ k \deg P = i}} \deg P = \sum_{\substack{d \in S^c \\ d \mid i}} d | \mathcal{P}_{d,q}| .
\]
Comparing coefficients in \eqref{eq:diff}, we obtain
\[ A_n = n^{-1} \sum_{i=1}^{n} A_{n-i} B_i.\]
Lemma \ref{lem:pnq} implies
\[ 
B_i \le \mathbf{1}_{i \not\in S} (i | \mathcal{P}_{i,q}|) + \sum_{d \le i/2} d| \mathcal{P}_{d,q}| \le \mathbf{1}_{i \not\in S} q^i +2q^{i/2},
\]
and so 
\[ 
A_n \le n^{-1} \sum_{i \in S^c} q^i A_{n-i} + 2n^{-1} \sum_{i=1}^{n} A_{n-i}q^{i/2}.
\]
The trivial bound $A_{n-i} \le q^{n-i}$ implies
\[ 
A_n \le n^{-1} \sum_{i \in S^c} q^i A_{n-i} + 5n^{-1} q^n \le n^{-1}q^n \brackets{ \sum_{0 \le i \le n} q^{-i} A_{i} + 5}.
\]
Since \[ F(1/q) =\sum_{i \ge 0} q^{-i}\widetilde{A}_i \ge \sum_{0\le i \le n} q^{-i} A_i,\] we find that
\[ A_n \le n^{-1} q^n \bigl( F(1/q) + 5\bigr). \]
By Lemma \ref{lem:pnq},
\[ 
\log F(1/q) = \sum_{d \in S^c} |\mathcal{P}_{d,q}| \sum_{k \ge 1}\frac{q^{-kd}}{k} \le \sum_{d \in S^c}\frac{1+q^{-d}}{d} \le \sum_{d \in S^c} \frac{1}{d} + 1,
\]
and recalling $\log n = \sum_{d=1}^{n} d^{-1} + O(1)$, we conclude that
\[ A_n \ll n^{-1} q^n \exp\brackets{ \sum_{d \in S^c} d^{-1}} \asymp q^n \exp\brackets{-\sum_{d \in S} d^{-1}}\]
as needed.
\end{proof}
\subsection{Proof of Lemma \ref{lem:char}}\label{sec:prooflemchar}
Due to the trivial estimate $|S(n,\chi)|\le q^n$, we may assume $n \ge 12(1+\log_q (1+\deg Q))$. Since $S(n,\chi)=0$ for $n\ge \deg Q$ we may also assume $\deg Q >n$. Applying Lemma \ref{lem:MV} with $\alpha=\chi$ and $S=\{m+1,m+2,\ldots,n\}$ for some $m \in (2\log_q \deg Q , n]$ (as indicated in Remark \ref{rem:bho}), we find that \eqref{eq:bho3} indeed holds by appealing to Lemma \ref{lem:sum von}. By Lemma \ref{lem:sieve},
\[q^{-n} \sum_{\substack{f \in \Mnq \\  P \mid f \implies \deg P \le m}} \chi(f) \ll q^{-n} \sum_{\substack{f \in \Mnq \\  P \mid f \implies \deg P \le m}} 1 \ll \frac{m}{n}.\]
Choosing $m= \lceil 2\log_q (n(1+\deg Q))\rceil $ in \eqref{eq:bho3} yields the statement of the lemma.

\subsection{Bound on a \texorpdfstring{$\gcd$}{gcd} sum}
For $L \ge 1$, let 
\[ B_{L,k} := \sum_{L\le d < 2L} \log \gcd(k,q^d-1).\]
We trivially have $B_{L,k} \ll L \min\{\log k, L\log q\}$, and this bound is optimal when $\log_q k \gg L^2$. For example, take $k=\prod_{1\le i < 2L}(q^i-1) = q^{\Theta(L^2)}$, then $q^d-1 \mid k$ for all $L\le d< 2L$ so that $B_{L,k} \asymp L^2 \log q$. For smaller $k$, however, this bound is too generous.

\begin{lem}\label{lem:Blk}
Suppose $L \ge \sqrt{\log_q k}$. Then $B_{L,k} \ll L \sqrt{\log k \log q}$.
\end{lem}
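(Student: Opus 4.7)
The plan is to expand $B_{L,k}$ over prime powers $p^j$ dividing $k$, exploit two basic facts about the order $e_{p,j}:=\mathrm{ord}_{p^j}(q)$, and balance the resulting contributions via an optimal threshold. The two facts are: (a) $p^j\mid q^d-1$ iff $e_{p,j}\mid d$, so the count of admissible $d\in[L,2L)$ is at most $L/e_{p,j}+1$ when $e_{p,j}\le 2L$, and zero otherwise; (b) $p^j\mid q^{e_{p,j}}-1$ forces $p^j\le q^{e_{p,j}}$, i.e.\ $\log p/e_{p,j}\le(\log q)/j$.

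Starting from $\log\gcd(k,q^d-1)=\sum_p\min(v_p(k),v_p(q^d-1))\log p$ and unrolling each $\min$ as $\sum_{j\ge 1}\mathbf{1}[p^j\mid k]\mathbf{1}[p^j\mid q^d-1]$, one obtains $B_{L,k}=\sum_{p^j\mid k}(\log p)N_{p,j}$ with $N_{p,j}:=|\{d\in[L,2L):p^j\mid q^d-1\}|$. Combining (a) and (b) and grouping by the value of $e_{p,j}$,
\[
B_{L,k}\le L\sum_{e\le 2L}\frac{A_e}{e}+\log k,\qquad A_e:=\sum_{\substack{p^j\mid k\\ e_{p,j}=e}}\log p.
\]
Two bounds on $A_e$ then drive the rest. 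First, since each $(p,j)$ with $p^j\mid k$ lies in exactly one $A_e$, one has $\sum_e A_e=\log k$. Second, $e_{p,j}=e$ forces $p^j\mid q^e-1$, so writing $J_p:=\{j:e_{p,j}=e,\,p^j\mid k\}\subseteq\{1,\ldots,v_p(q^e-1)\}$ gives $A_e=\sum_p|J_p|\log p\le\log(q^e-1)<e\log q$.

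Given these, for any $E\in[1,2L]$ I split $\sum_{e\le 2L}A_e/e\le\sum_{e\le E}\log q+E^{-1}\sum_{e>E}A_e\le E\log q+\log k/E$. Taking $E=\sqrt{\log k/\log q}$, which satisfies $E\le 2L$ precisely by the hypothesis $L\ge\sqrt{\log_q k}$, yields $\sum_{e\le 2L}A_e/e\ll\sqrt{\log k\log q}$. Hence $B_{L,k}\ll L\sqrt{\log k\log q}+\log k$, and the same hypothesis absorbs the $\log k$ term via $L\sqrt{\log k\log q}\ge\log k$. The subtlest step is the bound $A_e\le e\log q$: it is tempting to unpack the precise structure of $\mathrm{ord}_{p^j}(q)$ as $j$ varies via lifting-the-exponent, but the elementary divisibility $p^j\mid q^e-1$ alone is enough; similarly, one must keep track of the range of the threshold $E$, which is exactly where the hypothesis $L\ge\sqrt{\log_q k}$ enters.
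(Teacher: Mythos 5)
Your proof is correct, and it takes a genuinely different route from the paper's. The paper factors $q^d-1=\prod_{e\mid d}\phi_e(q)$ into cyclotomic values and relies on Lemma \ref{lem:cyc} (for a fixed prime $p\nmid q$, the set of indices $e$ with $p\mid \phi_e(q)$ is the geometric progression $\{p^i\,\ordpq\}$) to count how many cyclotomic factors inside the window a given prime of $k$ can divide; the dichotomy there is on the ratio $d/e$ with threshold $T=L/\sqrt{\log_q k}$. You keep $q^d-1$ intact and index everything by prime powers $p^j\mid k$ together with the order $e_{p,j}=\mathrm{ord}_{p^j}(q)$: your counting input is just that $\{d: p^j\mid q^d-1\}$ is the set of multiples of $e_{p,j}$ (hence at most $L/e_{p,j}+1$ of them in $[L,2L)$), and your size input $A_e\le \log(q^e-1)<e\log q$ plays the role of the paper's bound $\sum_{e\mid d,\,e\le d/T}\log\phi_e(q)\le \log q\sum e$. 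Both arguments then balance a counting bound against a trivial size bound at the threshold $\sqrt{\log_q k}$, so the quantitative outcome is identical; your version avoids cyclotomic polynomials and Lemma \ref{lem:cyc} entirely and is, if anything, more elementary. Two harmless cosmetic points: $\sum_e A_e$ equals $\log k$ only when $\gcd(k,q)=1$ (prime powers $p^j\mid k$ with $p=\charfq$ never divide $q^d-1$ and lie in no $A_e$), but you only use the inequality $\sum_e A_e\le \log k$, so nothing breaks; and when $E=\sqrt{\log_q k}<1$ one should fall back on $\sum_e A_e/e\le \log k\le\sqrt{\log k\log q}$, which your hypothesis again covers.
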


To prove this result we will need a couple of facts concerning cyclotomic polynomials. Recall that the cyclotomic polynomials $(\phi_n(x))_{n \ge 1}$ are defined recursively by
$\prod_{d \mid n} \phi_d(x) = x^n-1$.
They lie in $\ZZ[x]$ and can be written as $\phi_n(x) = \prod_{j \in (\ZZ/n\ZZ)^{\times}}(x-e^{2\pi i j/n})$. This last relation shows that if $m$ and $p$ are coprime where $p$ is a prime then $\phi_{mp^i}(x)=\phi_m(x^{p^i})/\phi_m(x^{p^{i-1}})$ for $i \ge 1$. 

The following lemma is classical but we could not find it explicitly stated in the literature and so we give a proof based on an argument implicit in Roitman \cite{Roitman}.

\begin{lem}\label{lem:cyc}
Fix an integer $a$. For a prime $p$ define  $A_{p} := \{ n \ge 1: p \mid \phi_n(a)\}$.
If $p\mid a$, then $A_p=\varnothing$. Otherwise, $A_p = \{p^i \ordpa : i \ge 0\}$ where $\ordpa$ is the multiplicative order of $a$ modulo $p$.
\end{lem}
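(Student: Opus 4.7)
The proof splits into two cases according to whether $p$ divides $a$. When $p\mid a$, reducing modulo $p$ gives $\phi_n(a)\equiv\phi_n(0)\pmod p$, and I would then observe that $\phi_n(0)\in\{-1,1\}$ for every $n\ge 1$: the case $n=1$ is immediate from $\phi_1(x)=x-1$, while for $n\ge 2$ the primitive $n$th roots of unity pair off under $\zeta\mapsto \zeta^{-1}$ with no fixed point, so their product is $1$, and $\varphi(n)$ is even. Consequently $p\nmid\phi_n(a)$ for all $n$, so $A_p=\varnothing$.

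From here on assume $p\nmid a$ and set $d=\ordpa$. I would first establish the forward containment $\{p^i d:i\ge 0\}\subseteq A_p$. For $i=0$, factoring $a^d-1=\prod_{e\mid d}\phi_e(a)$ and using $p\mid a^d-1$ shows that some $\phi_e(a)$ with $e\mid d$ is divisible by $p$; if $e<d$ then $\phi_e(a)\mid a^e-1$ would yield $a^e\equiv 1\pmod p$, contradicting minimality of $d$. Hence $p\mid\phi_d(a)$.

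The key technical ingredient for $i\ge 1$ is the congruence
\[
\phi_{mp^i}(x)\equiv\phi_m(x)^{\varphi(p^i)}\pmod p\qquad(p\nmid m,\ i\ge 1),
\]
which I would prove by iterating the recurrence $\phi_{mp^i}(x)=\phi_m(x^{p^i})/\phi_m(x^{p^{i-1}})$ noted just before the lemma, combined with the Frobenius identity $\phi_m(x^{p^j})\equiv\phi_m(x)^{p^j}\pmod p$ in $\FF_p[x]$. Evaluating at $x=a$ with $m=d$ yields $p\mid\phi_{dp^i}(a)$, completing the forward direction.

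For the reverse containment, suppose $p\mid\phi_n(a)$ and write $n=p^i m$ with $p\nmid m$. The same congruence (trivial when $i=0$) gives $p\mid\phi_m(a)$, so $a^m\equiv 1\pmod p$ and thus $d\mid m$. Since $p\nmid m$, the polynomial $x^m-1$ is separable modulo $p$, so the factors in $x^m-1=\prod_{e\mid m}\phi_e(x)$ are pairwise coprime in $\FF_p[x]$; hence $a\bmod p$ is a root of exactly one of them. Both $e=d$ (from the $i=0$ case applied to the divisor $d$ of $m$) and $e=m$ (by hypothesis) qualify, forcing $m=d$. The main obstacle I anticipate is the clean proof of the mod-$p$ identity $\phi_{mp^i}\equiv\phi_m^{\varphi(p^i)}$; once this engine for collapsing the $p$-power part of the index is in hand, the remaining arguments are elementary divisibility manipulations and the case $p=2$ requires no separate treatment.
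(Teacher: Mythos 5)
Your proof is correct, but it diverges from the paper's argument in two places, both worth noting. For the case $p \mid a$, the paper simply uses $\phi_n(a) \mid a^n-1$ and $a^n-1 \equiv -1 \bmod p$; your route through $\phi_n(0) \in \{-1,1\}$ also works, but your justification (``pair off under $\zeta \mapsto \zeta^{-1}$ with no fixed point, $\varphi(n)$ even'') fails for $n=2$, where $-1$ is a fixed point and $\varphi(2)=1$; you need to check $\phi_2(0)=1$ directly (the conclusion $\phi_n(0)\in\{-1,1\}$ is of course still true). The more interesting divergence is in the reverse containment. The paper first shows $A_p \subseteq \{m\,\ordpa : m \ge 1\}$ via a gcd argument on $a^n-1$, and then rules out primes $p' \neq p$ dividing $n/\ordpa$ by the explicit computation $\phi_n(a) \mid (a^n-1)/(a^{n/p'}-1) = \sum_{i=0}^{p'-1} a^{in/p'} \equiv p' \bmod p$. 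You instead run the Frobenius congruence $\phi_{mp^i} \equiv \phi_m^{\varphi(p^i)} \bmod p$ in both directions: it collapses the $p$-part of the index at once, reducing to $p \mid \phi_m(a)$ with $p \nmid m$, and then the separability of $x^m-1$ over $\FF_p$ (the factors $\phi_e \bmod p$, $e \mid m$, being pairwise coprime) forces $m = \ordpa$. This is a clean and somewhat more conceptual uniqueness argument, at the cost of needing the mod-$p$ identity as a two-way tool rather than only for the final inclusion $\{p^i\,\ordpa\} \subseteq A_p$ as in the paper; your derivation of that identity from the recurrence $\phi_{mp^i}(x) = \phi_m(x^{p^i})/\phi_m(x^{p^{i-1}})$ and Frobenius is sound (the division is exact in $\ZZ[x]$, so the congruence holds in $\FF_p[x]$). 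The forward containment for $i=0$ via minimality of $\ordpa$ among $e \mid d$ is also fine and essentially matches the paper.
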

\begin{proof}
If $p \mid \phi_n(a)$, then $p \mid a^n-1 $ since $\phi_n(a) \mid a^n-1$. If $p\mid a$, then $a^n -1 \equiv - 1 \bmod p$, which is a contradiction. Thus, $A_p=\varnothing$ in this case. 

Now suppose $p \nmid a$. By definition, we have $p \mid a^{\ordpa}-1$. Suppose $ p\mid \phi_n(a) \mid a^n -1$ for some positive integer $n$, then
\[ p \mid \gcd(a^n-1,a^{\ordpa}-1)= a^{\gcd(n,\ordpa)}-1.\]
Since $\ordpa = \min\{n \ge 1: p \mid a^n -1$\}, we have $\gcd(n,\ordpa) \ge \ordpa$ and thus $\ordpa \mid n$. This shows 
\begin{equation}\label{eq:inc} A_p \subseteq \{ m \cdot \ordpa: m \ge 1\}.
\end{equation}
Next we show $\ordpa \in A_p$. Since $p\mid a^{\ordpa}-1 = \prod_{e \mid \ordpa}\phi_e(a)$, it follows that $A_p$ contains some divisor of $\ordpa$, which by  \eqref{eq:inc} then has to be $\ordpa$ itself. Next, we want to show that for $n \in A_p$, $n/\ordpa$ is a power of $p$. If $p^\prime$ is a prime dividing $n/\ordpa$, then $\ordpa \mid n/p'$ and so
\[ 
p \mid \phi_{n}(a) = \frac{a^{n}-1}{\prod_{d \mid n,\, d \neq n} \phi_d(a)} \mid \frac{a^{n}-1}{\prod_{d \mid n/p'} \phi_d(a)} =\frac{a^{n}-1}{a^{n/p'}-1} = \sum_{i=0}^{p'-1} a^{in/p'} \equiv \sum_{i=0}^{p'-1} 1 = p' \bmod p,
\]
where we used that by definition of $\ordpa$
\[
a^{n/p'} = (a^{\ordpa})^{\frac{n}{p' \ordpa}} \equiv 1^{\frac{n}{p'\ordpa}} \equiv 1 \bmod p.
\]
Hence, $p^\prime$ must be $p$, and $n$ is indeed $\ordpa$ times a power of $p$. This shows $A_p \subseteq \{p^i \ordpa: i \ge 0\}$. Finally, the relation $\phi_{p^i m}(a) = \phi_{m}(a^{p^i})/\phi_m(a^{p^{i-1}})$ for $\gcd(m,p)=1$ and $i \ge 1$ implies $p^i \ordpa \in A_p$ for every $i \ge 1$ since $\phi_{m}(a^{p^i})/\phi_{m}(a^{p^{i-1}}) \equiv \phi_m(a)^{p^i-p^{i-1}} \bmod p$.
\end{proof}
 
\begin{proof}[Proof of Lemma \ref{lem:Blk}]
Recall that $q^d-1 = \prod_{e \mid d} \phi_e(q)$.
Because $\gcd(A,ab) \le \gcd(A,a)\gcd(A,b)$, it then follows that
\[ 
\log \gcd(k,q^d-1) \le \sum_{e \mid d} \log \gcd(k,\phi_e(q)) = \sum_{e \mid d} \sum_{p \mid k} \log \gcd(p^{\nu_p(k)},\phi_e(q)),
\]
where $p$ stands for prime and $\nu_p(k)$ is the multiplicity of $p$ in $k$ (i.e., the $p$-adic valuation of $k$). Hence,
\[ 
B_{L,k} \le \sum_{L\le d < 2L} \sum_{\substack{p \mid k,\,\phi_e(q)\\ e \mid d}} \log \gcd(p^{\nu_p(k)},\phi_e(q)).\]
We introduce a parameter $T\ge 1$. We shall show
\begin{equation}\label{eq:BLkbound}
     B_{L,k} \ll  T \log k+ \frac{L^2 \log q}{T},
\end{equation}
and then take $T=L/\sqrt{\log_q k}$.
We consider separately the contribution of $e> d/T$ and $e \le d/T$, obtaining
\[B_{L,k} \le B_{L,k,1}+B_{L,k,2},\]
where
\begin{align}
B_{L,k,1} &=\sum_{L\le d < 2L} \sum_{\substack{p \mid k,\,\phi_e(q)\\ e \mid d \\ e >d/T}} \log \gcd(p^{\nu_p(k)},\phi_e(q)) ,\\
B_{L,k,2} &=\sum_{L\le d < 2L} \sum_{\substack{p \mid k,\,\phi_e(q)\\ e \mid d \\ e \le d/T}} \log \gcd(p^{\nu_p(k)},\phi_e(q)).
\end{align}
Let us bound $B_{L,k,1}$. By interchanging the order of summation,
\begin{align}
B_{L,k,1} \le \sum_{\substack{e, \, p: \,p \mid k,\, \phi_e(q) \\ L/T < e < 2L}} \nu_p(k)\log p\sum_{\substack{L\le d < 2L\\ e \mid d}} 1 \ll 	L \sum_{\substack{e, \, p: \\p \mid k,\,\phi_e(q) \\ L/T <e < 2L}} \frac{\nu_p(k)\log p }{e} .
\end{align}
Let $A_p:=\{ n\ge 1: p \text{ divides }\phi_n(q)\}$. By Lemma \ref{lem:cyc}, $A_p$ is either empty or is a geometric progression with step size $p$, so that 
\[ \sum_{\substack{ e:\,  p\mid \phi_e(q)\\  L/T <e < 2L}} \frac{1}{e}  \ll \frac{T}{L} \sum_{i \ge 0} p^{-i} \ll \frac{T}{L}\]
and it follows that
\begin{align}
B_{L,k,1} &\ll T \sum_{p \mid k}  \nu_p(k) \log p = T \log k. 
\end{align} 
Next, omitting the restriction $p \mid k$ and using $\phi_e(q) \mid q^e-1 \le q^e$, we have
\[
B_{L,k,2} \le \sum_{L\le d < 2L} \sum_{\substack{e \mid d\\ e \le d/T}} \log \phi_e(q)\le \log q \sum_{L\le d < 2L} \sum_{\substack{e \mid d\\ e \le d/T}} e \ll  L\log q \sum_{1\le e < 2L/T} 1 \ll \frac{L^2 \log q}{T}.
\]
This implies \eqref{eq:BLkbound}, and thus the statement of the lemma.
\end{proof}

\subsection{Symmetry of character sums}
\begin{lem}\label{lem:sym}
Let $\psi\colon \FF_q \to \CC^{\times}$ be an additive character. Let $k,n \ge 1$, and let $k^\prime = \gcd(k,q^n-1)$. Then, 
\[ \sum_{f \in \Mnq} \Lambda(f)\chi_{k,\psi}(f) =\sum_{f \in \Mnq} \Lambda(f)\chi_{k^\prime,\psi}(f).\]
\end{lem}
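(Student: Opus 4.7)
The strategy is to rewrite both sides as a sum over $\FF_{q^n}^*$ and reduce the identity to an elementary group-theoretic statement about the cyclic group of order $q^n-1$. Since $\Lambda$ is supported on prime powers and $\chi_{k,\psi}$ vanishes on multiples of $T$, I would first decompose
\[ \sum_{f \in \Mnq}\Lambda(f)\chi_{k,\psi}(f) = \sum_{d \mid n} \sum_{\substack{P \in \mathcal{P}_{d,q} \\ P \neq T}} d \cdot \psi\bigl(p_{-k}(P^{n/d})\bigr). \]
For a prime $P \neq T$ of degree $d$, the roots of $P$ form a single Frobenius orbit $\{\lambda, \lambda^q, \dots, \lambda^{q^{d-1}}\}$ in $\FF_{q^d}^\times$, and the roots of $P^{n/d}$ are those roots each taken with multiplicity $n/d$. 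Hence
\[ p_{-k}(P^{n/d}) = (n/d)\sum_{i=0}^{d-1}(\lambda^{q^i})^{-k} = (n/d)\cdot\Tr_{\FF_{q^d}/\FF_q}(\lambda^{-k}). \]

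The key simplification is the trace transitivity identity $(n/d)\Tr_{\FF_{q^d}/\FF_q}(\mu)=\Tr_{\FF_{q^n}/\FF_q}(\mu)$ for $\mu \in \FF_{q^d}$ and $d\mid n$, which follows from $\Tr_{\FF_{q^n}/\FF_{q^d}}(\mu)=(n/d)\mu$ together with transitivity of the trace. Applied with $\mu=\lambda^{-k}$, it lets me rewrite the $\psi$-argument uniformly across all $d \mid n$. The factor of $d$ from $\Lambda$ can then be absorbed by summing over the $d$ elements of the Frobenius orbit of $\lambda$ (all of which give the same trace), and since the sets of primitive elements $\{\lambda \in \FF_{q^n}^\times : [\FF_q(\lambda):\FF_q]=d\}$ for $d \mid n$ partition $\FF_{q^n}^\times$, this collapses to
\[ \sum_{f \in \Mnq}\Lambda(f)\chi_{k,\psi}(f) = \sum_{\lambda \in \FF_{q^n}^\times} \psi\bigl(\Tr_{\FF_{q^n}/\FF_q}(\lambda^{-k})\bigr). \]

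Now the symmetry becomes transparent. The multiplicative group $\FF_{q^n}^\times$ is cyclic of order $q^n-1$, so the power map $\lambda \mapsto \lambda^{-k}$ is a $k'$-to-one surjection onto the subgroup of $k$-th powers, which is precisely the unique subgroup of order $(q^n-1)/k'$. Since $k'\mid q^n-1$, the map $\lambda \mapsto \lambda^{-k'}$ is likewise $k'$-to-one onto the same subgroup. Therefore the multisets $\{\lambda^{-k}:\lambda \in \FF_{q^n}^\times\}$ and $\{\lambda^{-k'}:\lambda \in \FF_{q^n}^\times\}$ coincide, giving
\[ \sum_{\lambda \in \FF_{q^n}^\times}\psi\bigl(\Tr_{\FF_{q^n}/\FF_q}(\lambda^{-k})\bigr) = \sum_{\lambda \in \FF_{q^n}^\times}\psi\bigl(\Tr_{\FF_{q^n}/\FF_q}(\lambda^{-k'})\bigr). \]
Running the first two steps in reverse with $k$ replaced by $k'$ identifies the right-hand side with $\sum_{f \in \Mnq}\Lambda(f)\chi_{k',\psi}(f)$, which completes the proof. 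I do not expect any real obstacle: the only delicate points are correctly handling the exclusion of $T$ (which is automatic since we sum over $\FF_{q^n}^\times$) and verifying that the integer $n/d$ in the trace identity behaves correctly in characteristic $p$, which it does because $\Tr_{\FF_{q^n}/\FF_{q^d}}$ restricted to $\FF_{q^d}$ is multiplication by $n/d$ as an element of $\FF_p$.
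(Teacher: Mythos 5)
Your proof is correct and follows essentially the same route as the paper's: both reduce the left-hand side to $\sum_{\lambda \in \FF_{q^n}^{\times}} \psi\bigl(\Tr_{\FF_{q^n}/\FF_q}(\lambda^{-k})\bigr)$ (the paper via the map $x \mapsto \prod_{i=0}^{n-1}(T-x^{q^i})$ with $\Lambda(f)$ counting preimages, you via the explicit decomposition over $d \mid n$ and trace transitivity) and then conclude by observing that $\lambda \mapsto \lambda^{-k}$ and $\lambda \mapsto \lambda^{-k'}$ have the same image and the same fibre size in the cyclic group $\FF_{q^n}^{\times}$. Your write-up merely spells out in more detail the bookkeeping that the paper compresses into the statement about the map $\Phi$ and its preimages.
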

\begin{proof}
Consider a surjective map \[\Phi\colon \FF_{q^{n}}^{\times} \to \{ f \in \Mnq \setminus \{ T\}: \Lambda(f) \neq 0\}\]
given by \[\Phi(x) = \prod_{\sigma \in \mathrm{Gal}(\FF_{q^n}/\FF_q)}(T-\sigma(x))=\prod_{i=0}^{n-1}(T-x^{q^i}). \]
Alternatively, one can also write $\Phi(x)=m_{\alpha}(x)^{n/\deg m_{\alpha}}$, where $m_{\alpha}$ is the minimal polynomial of $x$ over $\FF_q$. Every element $f$ in the image of $\Phi$ has $\Lambda(f)$ preimages given by the Galois conjugates of a given preimage, namely, $\{x^{q^i}: 0 \le i <d\}$ if $\deg m_{x}=d$. Because $\chi_{k,\psi}(f)$ is defined in terms of the zeros of $f$, the map $\Phi$ allows us to conveniently write
\[ \sum_{f \in \Mnq}\Lambda(f)\chi_{k,\psi}(f) = \sum_{x \in \FF_{q^n}^{\times}} \psi(\sum_{i=0}^{n-1} (x^{q^i})^{-k}).\]
By Euclid's algorithm, we can write $k^\prime = \gcd(k,q^n-1)=ak+b(q^n-1)$ for some coprime integers $a$ and $b$. In particular, $x^{k^\prime}=(x^k)^a$. Conversely, $x^k = (x^{k^\prime})^{k/k^\prime}$. It follows that the group endomorphisms $x \mapsto x^{k}$ and $x \mapsto x^{\gcd(k,q^n-1)}$ defined on $\FF_{q^n}^{\times}$ have the same image and kernel. This implies that every element in the image is attained the same number of times, and so 
\[\sum_{x \in \FF_{q^n}^{\times}} \psi(\sum_{i=0}^{n-1} (x^{q^i})^{-k})=\sum_{x \in \FF_{q^n}^{\times}} \psi(\sum_{i=0}^{n-1} (x^{q^i})^{-k^\prime})= \sum_{f \in \Mnq}\Lambda(f)\chi_{k^\prime,\psi}(f) \]
as required.
\end{proof}
From Lemmas \ref{lem:invchar}, \ref{lem:sum von}, and \ref{lem:sym}, we conclude the following.

\begin{cor}\label{cor:sym}
Let $\psi\colon \FF_q \to \CC^{\times}$ be a nontrivial additive character. Let $k,n \ge 1$. We have
\[ \bigg|\sum_{f \in \Mnq} \Lambda(f)\chi_{k,\psi}(f)\bigg| \le q^{\frac{n}{2}} \gcd(k,q^n-1).\]
\end{cor}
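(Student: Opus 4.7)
The plan is to combine the three lemmas just proved in a straightforward chain. First, Lemma \ref{lem:sym} gives
\[ \sum_{f \in \Mnq} \Lambda(f) \chi_{k,\psi}(f) = \sum_{f \in \Mnq} \Lambda(f) \chi_{k',\psi}(f), \]
where $k' := \gcd(k, q^n - 1)$. This reduces matters to a character with much smaller ``effective conductor'' $k'\le q^n-1$, and is the one genuinely nontrivial input.

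Second, recall from the definition $\chi_{k',\psi} = \iota_{\xi_{k',\psi}}$ that $\chi_{k',\psi}$ is a completely multiplicative function depending only on $f \bmod T^{k'+1}$, hence a Dirichlet character modulo $T^{k'+1}$. By Lemma \ref{lem:invchar} (applied with the tuple $a_{k'} = 1$, $a_i = 0$ for $i < k'$), it is in fact primitive whenever $\charfq \nmid k'$, and in particular nonprincipal. Consequently its $L$-function is a polynomial of degree
\[ d(\chi_{k',\psi}) \le \deg(T^{k'+1}) - 1 = k'. \]

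Third, apply the Weil bound of Lemma \ref{lem:sum von} to the right-hand side:
\[ \Bigl|\sum_{f \in \Mnq} \Lambda(f)\, \chi_{k',\psi}(f)\Bigr| \le q^{n/2}\, d(\chi_{k',\psi}) \le q^{n/2}\, k' = q^{n/2}\,\gcd(k,q^n-1), \]
which is the desired estimate.

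The one delicacy, which I expect to be the main (and only) obstacle, is the hypothesis $\charfq \nmid k'$ in Lemma \ref{lem:invchar}. If $\charfq \mid k'$, the plan is to use the Frobenius identity $p_{-pm}(f) = p_{-m}(f)^p$ valid in characteristic $p$, which yields $\chi_{pm,\psi} = \chi_{m,\psi'}$ for the nontrivial additive character $\psi'(x) := \psi(x^p)$. Iterating this to strip off all factors of $\charfq$ from $k'$, one reduces to a primitive character modulo $T^{m+1}$ with $m \le k'$, so that the Weil bound still produces the required estimate $q^{n/2}k'$ (in fact a slightly stronger one). This handles every case.
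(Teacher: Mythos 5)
Your proof is correct and takes essentially the same route as the paper, which deduces the corollary directly from Lemmas \ref{lem:invchar}, \ref{lem:sum von}, and \ref{lem:sym} in exactly the chain you describe. The contingency you add for $\charfq \mid k'$ is sound but never needed: since $k' = \gcd(k,q^n-1)$ divides $q^n-1$ and $q^n-1 \equiv -1 \pmod{\charfq}$, one automatically has $\charfq \nmid k'$, so Lemma \ref{lem:invchar} applies without modification.
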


\subsection{Proof of Proposition \ref{prop:crit}}

Let $m:=\lceil 12\log_q n \rceil$, and 
\begin{equation}\label{eq:Sdef}
S=\{ m \le d \le n: \gcd(k,q^d-1) < q^{d/3}\}.
\end{equation}
Let $S^c = \{1,\ldots, n\} \setminus S$. As before, let
\[ q^{-n} \sum_{f \in \Mnq} \chi_{k,\psi}(f) =q^{-n} \sum_{\substack{f \in \Mnq \\ \exists P \mid f \text{ such that }\deg P \in S }}\chi_{k,\psi}(f) + q^{-n} \sum_{\substack{f \in \Mnq \\  P \mid f \implies \deg P \in S^c}} \chi_{k,\psi}(f).\]
Applying Lemma \ref{lem:MV} for $\alpha=\chi_{k,\psi}$ and our chosen $S$ together with Corollary \ref{cor:sym}, we get
\begin{align} 
&q^{-n} \sum_{\substack{f \in \Mnq \\ \exists P \mid f \text{ such that }\deg P \in S }}\chi_{k,\psi}(f) \\
&\ll \exp\left(\sum_{d \in S^c} d^{-1}\right) \left(\exp\left(\sum_{d \in S} \frac{q^{-d/6}}{d} + O(q^{-m/2}/m)\right)-1\right)\ll \frac{n q^{-m/6}}{m}\ll n^{-1}.
\end{align}
By Lemma \ref{lem:sieve},
\[q^{-n} \sum_{\substack{f \in \Mnq \\  P \mid f \implies \deg P \in S^c}} \chi_{k,\psi}(f) \ll q^{-n} \sum_{\substack{f \in \Mnq \\  P \mid f \implies \deg P  \in S^c}} 1 \ll \exp(-\sum_{d \in S}d^{-1}).\]
\begin{remark}\label{rem:mo}
The same proof works as is if we replace $\chi_{k,\psi}$ by $\mu\cdot \chi_{k,\psi}$, because
\[ \sum_{f \in \mathcal{M}_{d,q}} \chi_{k,\psi}(f)\Lambda(f) = -\sum_{f \in \mathcal{M}_{d,q}}\mu(f) \chi_{k,\psi}(f)\Lambda(f) + O(q^{d/2}).\]
\end{remark}
\subsection{Conclusion of proof of Theorem \ref{thm:canc}}
Due to the trivial estimate $|S(n,\chi)|\le q^n$, we may assume $n \ge 12(1+ \sqrt{\log_q 
k})$. In view of Proposition \ref{prop:crit} it suffices to show
\[ -\sum_{d \in S}d^{-1} \le \log (1+\sqrt{\log_q k} +\log_q n ) -\log n+O(1),\]
where $S$ is as in \eqref{eq:Sdef}. To show this, first observe that $-\sum_{d \in S} d^{-1} =\sum_{d \in S^c} d^{-1} - \log n + O(1)$. Let $m' = \max\{\lceil 12\log_q n\rceil, \sqrt{\log_q k}\}$. Since $\mathbf{1}_{A \ge B} \le \log A/\log B$, 
\[  \sum_{d \in S^c} d^{-1} \le \sum_{d \le m'} d^{-1} +  \sum_{\substack{m'\le d \le n \\ \gcd(k,q^d-1) \ge q^{d/3}}}  d^{-1} \le \log m' 
 + O(1) +\sum_{m'\le d \le n}d^{-1}\frac{\log \gcd(k,q^d-1)}{\log (q^{d/3})} . \]
By Lemma \ref{lem:Blk}, the last $d$-sum is $O(\sqrt{\log_q k}/m^\prime)$, finishing the proof.
\section*{Acknowledgments}
We thank Brad Rodgers and Zeev Rudnick for comments on an earlier version of the paper. We appreciate the comments and suggestions given by the anonymous referee, which improved the presentation of the paper. O.G. was supported by the European
Research Council (ERC) under the European Union's Horizon 2020 research and innovation programme
(grant agreement number 851318). V.K. was supported by ERC LogCorRM (grant number 740900) and by CRM-ISM postdoctoral fellowship. 

\appendix
\section{Squareroot cancellation in character sums}\label{sec:app}
Under the generalized Riemann hypothesis for $L(s,\chi)$ one has $\sum_{n \le x} \chi(n) \ll \sqrt{x} \exp(C\log m/\log \log m)$ for a nonprincipal Dirichlet character $\chi$ modulo $m$ \cite{Bhowmick2017}.
Bhowmick, L\^{e} and Liu \cite{Bhowmick2017} proved the function field analogue
\begin{equation}\label{eq:bho}
	\sum_{f \in \Mnq} \chi(f) \ll q^{n/2} \exp\left( C_q\left(\frac{ n \log \log \deg Q}{\log \deg Q}+ \frac{\deg Q}{\log^2 \deg Q}\right)\right)
\end{equation}
unconditionally, for a constant $C_q$ that depends only on $q$, where $\chi$ is a character modulo $Q \in \FF_q[T]$. The estimate \eqref{eq:bho} implies
$\sum_{f \in \Mnq}\chi(f) \ll q^{\frac{n}{2}(1+o(1))}$ for $\deg Q=o(n\log^2 n)$. 
Here we prove that squareroot cancellation holds in the wider range $\deg Q\le n^{1+o(1)}$.
\begin{prop}\label{prop:squareroot}
	Let $\chi$ be a nonprincipal Dirichlet character modulo $Q \in \FF_q[T]$. There is a constant $C_q$ that depends only on $q$ such that 
	\begin{equation}
		\sum_{f \in \Mnq} \chi(f) \ll_q q^{\frac{n}{2}} \exp\left(  \max\left\{C_q n\frac{\log ( \deg Q\log \deg Q/n)}{\log (1+\deg Q)},0\right\}  \right).
	\end{equation}
\end{prop}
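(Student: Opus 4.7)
The plan is to combine the smooth-rough decomposition of Lemma \ref{lem:MV} (as already used to prove Lemma \ref{lem:char}) with a Cauchy-formula bound on the smooth sum, then optimize parameters to push cancellation into the range $\deg Q \le n^{1+o(1)}$.

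First, I would apply Lemma \ref{lem:MV} with $\alpha=\chi$ and $S=\{m+1,\ldots,n\}$ for a parameter $m \in \{1,\ldots,n\}$. Combined with the Weil bound $|\sum_{f\in\mathcal{M}_{d,q}}\chi(f)\Lambda(f)| \le q^{d/2} d(\chi) \le q^{d/2}\deg Q$ from Lemma \ref{lem:sum von}, this yields
\[
S(n,\chi) = \sum_{\substack{f \in \Mnq \\ P \mid f \Rightarrow \deg P \le m}}\chi(f) + O(q^{n-m/2}\deg Q),
\]
exactly as in the proof of Lemma \ref{lem:char}. Unlike there, however, the trivial sieve bound on the smooth sum is too weak to deliver squareroot cancellation, and I cannot take $m$ large enough to absorb the rough error into $q^{n/2}$, so a finer treatment of the smooth sum is required.

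Next, for the smooth sum $S_m := [u^n] F_{\le m}(u,\chi)$ with $F_{\le m}(u,\chi) = \prod_{\deg P\le m}(1-\chi(P)u^{\deg P})^{-1}$ analytic in $|u|<1$, I would apply Cauchy's integral formula on $|u|=R$ with $R$ slightly below $q^{-1/2}$, giving $|S_m|\le R^{-n}\max_{|u|=R}|F_{\le m}(u,\chi)|$. To control $\log F_{\le m}(u,\chi) = \sum_{j\ge 1}T_j(\chi) u^j/j$ with $T_j(\chi) = \sum_{f \in \mathcal{M}_{j,q},\, P\mid f \Rightarrow \deg P\le m}\chi(f)\Lambda(f)$, I would use Weil for $j\le m$ (where $T_j$ coincides with the full character sum, so $|T_j|\le q^{j/2}\deg Q$) and the trivial bound $|T_j|\le 2q^{j/2}$ for $j>m$ (since only prime powers $P^k$ with $k\ge 2$ and $\deg P\le m$ contribute). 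Writing $R=q^{-1/2}(1-\eta)^{1/2}$, the head contributes $\deg Q\cdot\min(\log m,\log(1/\eta))$ and the tail decays like $y^m/m$ with $y=R\sqrt q<1$, giving a bound of the form $|S_m|\ll q^{n/2}\exp(n\eta+\deg Q\cdot\min(\log m,\log(1/\eta)))$.

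Optimizing $m$ and $\eta$ jointly to balance the rough error $q^{n-m/2}\deg Q$ against the smooth bound above should yield the claimed exponent $C_q n\log(\deg Q\log\deg Q/n)/\log(1+\deg Q)$. The main obstacle is executing this joint optimization rigorously — in particular, verifying that the saddle-point choice of $R$ and $m$ produces precisely the stated form, and handling the transition between the regime $\deg Q\ll n/\log n$ (where Weil alone suffices, giving the $\max=0$ case) and the regime $\deg Q\approx n^{1+o(1)}$ (the limit of the method). An alternative via Vaughan's identity applied to $\chi$, or an iterated application of Lemma \ref{lem:MV} with geometrically decreasing thresholds, might offer a cleaner route through the bookkeeping.
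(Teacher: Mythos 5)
Your overall framework (bound the generating function on a circle of radius slightly inside $q^{-1/2}$ and control $\log F$ via prime sums) is the right one, but there are two problems, one structural and one fatal. Structurally, the smooth--rough decomposition buys you nothing here: to make the rough error $q^{n-m/2}\deg Q$ smaller than the target $q^{n/2}\exp(o(n))$ you are forced to take $m=n-o(n)$, at which point $F_{\le m}$ is essentially the full $L$-function; the paper accordingly works with $L(u,\chi)$ directly and never splits off a rough part. The fatal problem is your bound on the head. Using Weil for every $j\le m$ gives a contribution to the exponent of size $\deg Q\sum_{j\le m}(R\sqrt q)^j/j$, which is at least a positive constant times $\deg Q\ge n$ for any admissible radius: if instead you shrink $R$ enough that this head is $o(n)$, then $R^{-n}$ explodes far beyond $\exp\bigl(C_q n\log(\deg Q\log\deg Q/n)/\log\deg Q\bigr)$, whose exponent is $o(n)$ precisely in the regime $\deg Q\le n^{1+o(1)}$ where the proposition has content. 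So no joint optimization of $m$ and $\eta$ can recover the stated bound from your intermediate inequality.

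The missing idea is that for $j\le L:=\lfloor 2\log_q d(\chi)\rfloor$ the trivial bound $\bigl|\sum_{f\in\mathcal{M}_{j,q}}\chi(f)\Lambda(f)\bigr|\le q^j$ beats Weil, so one should bound the prime sums by $\min\{q^j,\,d(\chi)q^{j/2}\}$ throughout. After rescaling $u\mapsto u/\sqrt q$, the head $\sum_{j\le L}(R\sqrt q)^j/j$ is then $\ll d(\chi)R^L$, and with the choice $R=q^{-a/(2\log d(\chi))}$, $a=\log(d(\chi)\log d(\chi)/n)$, one gets $d(\chi)R^L\ll n/\log d(\chi)$ while $-n\log R$ produces exactly the claimed exponent; the tail $j>L$ is handled by Weil alone. (Also, the case $\deg Q\ll n/\log n$ is not a separate ``Weil suffices'' regime: if $d(\chi)<n$ then $L(u,\chi)$ is a polynomial of degree less than $n$ and $S(n,\chi)=0$ outright, which is how the paper disposes of the $\max=0$ case.)
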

Recall $d(\chi):=\deg L(u,\chi)<\deg Q$.
If $d(\chi) <n$ we have $S(n,\chi)=0$, so from now on we assume that $d(\chi) \ge n$. In view of the trivial bound $|S(n,\chi)| \le q^n$ we may also assume that $n$ is larger than a fixed constant, as well as that $d \le n^{1+\delta}$ for a fixed $\delta>0$. From \eqref{eq:euler},
\begin{equation}\label{eq:lgl log}
	L(u,\chi) =\exp\left( \sum_{k \ge 1} \frac{u^k}{k} \sum_{f \in \mathcal{M}_{k,q}} \chi(f) \Lambda(f) \right).
\end{equation}
Set $L:=\lfloor 2\log_q d \rfloor$. For $k \le L$ the trivial bound 
\begin{equation}\label{eq:trivial}
	\left| \sum_{f \in \mathcal{M}_{k,q}} \chi(f) \Lambda(f)\right| \le \sum_{f \in \mathcal{M}_{k,q}}\Lambda(f) = q^k
\end{equation}
coming from \eqref{eq:gauss} is superior to \eqref{eq:weil}. We see from \eqref{eq:weil} and \eqref{eq:trivial} that the coefficients of $L(u,\chi)$ are bounded (in absolute value) from above by those of \[\exp\bigg( \sum_{k \ge 1} \frac{u^k}{k} \min\{q^k, d(\chi) q^{k/2}\} \bigg),\] hence, writing $[u^n]F$ for the $n$th coefficient of a power series $F$,
\begin{align}
	|S(n,\chi)| = q^{n/2}|[u^n]L(u/\sqrt{q},\chi)| &\le q^{n/2}[u^n] \exp\bigg( \sum_{k \ge 1} \frac{u^k}{k} \min\{q^{k/2}, d(\chi)\} \bigg) \\
	&\le q^{n/2} \exp\bigg( \sum_{k\ge 1} \frac{R^k}{k} \min\{q^{k/2}, d(\chi)\} \bigg) R^{-n}
\end{align}
for any $R \in (0,1)$ (this is the same observation used in the proof of Lemma \ref{lem:MV}). If	$6/(5\sqrt{q}) < R < 1$ then
\begin{equation}
	\sum_{1 \le k \le L} \frac{R^{k}}{k} \min\{q^{k/2},d(\chi)\} = \sum_{1 \le k \le L} \frac{(R\sqrt{q})^k}{k} \le  \sum_{1 \le k \le L} (R\sqrt{q})^k  \le \frac{(R\sqrt{q})^L}{1-(R\sqrt{q})^{-1}} \le 6d(\chi) R^L 
\end{equation}
and 
\begin{equation}
	\sum_{k > L} \frac{R^k}{k} \min\{q^{k/2}, d(\chi)\}  \le d(\chi) \sum_{k>L} \frac{R^k}{k} \le \frac{d(\chi)}{L+1} \sum_{k>L} R^k = \frac{d(\chi)}{L+1}  \frac{R^{L+1}}{1-R},
\end{equation}
so that
\begin{equation}\label{eq:zurbound}
	|S(n,\chi)/q^{n/2}| \le \exp\Big( 6d(\chi)R^L \Big( 1 + \frac{R}{(L+1)(1-R)}\Big)  - n \log R\Big).    
\end{equation}
Set $a := \log (d(\chi) \log d(\chi) /n)$ and take $R=q^{-a/(2\log d(\chi))}$. The assumption $d(\chi) \ge n$ implies $a > 0$ (at least if $n \ge 3$) and so $R < 1$. (This choice of $R$ differs from the choice in \cite{Bhowmick2017}, where $a$ is chosen to be of order $\log \log d(\chi)$.) Since we may assume $n \le d(\chi) \le n^{1+\delta}$ and $n \ge C$ for $\delta$ and $C$ we choose, we may also assume $a/\log d(\chi)$ is at most $\delta'$ for a $\delta'$ we want. In particular, $R > 6/(5\sqrt{q})$ may be assumed from now on by taking small enough $\delta'$. Moreover,	\begin{equation}\label{eq:valsinexp}
	-n \log R \le  (\log q) n \frac{\log (d(\chi) \log d(\chi) /n)}{\log d(\chi)}, \qquad R^L \le   q^{-\frac{\log (d(\chi) \log d(\chi)/n)}{2\log d(\chi)} (2 \log_q d(\chi) - 1)} \le \frac{2}{(d(\chi) \log d(\chi)/n)}
\end{equation}
for large enough $n$. Additionally, $a\log q/\log d(\chi) \in (0,1)$ holds (by taking small enough $\delta'$) which implies $R \le 1-a\log q/(4\log d(\chi))$, and so
\begin{equation}\label{eq:valsinexp2}
	\frac{R}{(L+1)(1-R)} \le \frac{1}{L+1} \frac{1}{1-R} \le \frac{\log q}{2 \log d(\chi)} \frac{ 4\log d(\chi)}{\log q \log (d(\chi) \log d(\chi) /n)} = \frac{2}{\log (d(\chi) \log d(\chi)/n)} \le 1
\end{equation}
for large enough $n$. All in all,
\begin{equation}\label{eq:zurbound2}
	|S(n,\chi)/q^{n/2}| \le \exp\bigg( \frac{24n}{\log d(\chi)} +(\log q) n \frac{\log (d(\chi) \log d(\chi)/n)}{\log d(\chi)} \bigg) \le \exp\bigg( C_q n \frac{\log (d(\chi) \log d(\chi)/n)}{\log d(\chi)} \bigg).
\end{equation}
Since $\deg Q > d(\chi)$, the claim follows.

\bibliographystyle{abbrv}
\bibliography{references} 
\Addresses
\end{document}